\def\dd {{\boldsymbol d}}
\def\f {{\boldsymbol f}}
\def\u {{\boldsymbol u}}
\def\n {{\boldsymbol n}}
\def\x {{\boldsymbol x}}
\def\vv {{\boldsymbol vv}}
\def\w {{\boldsymbol w}}
\def\Bphi {{\boldsymbol \phi}}
\def\utecho{{\widehat{\boldsymbol u}}}
\def\Om {{\Omega}}
\def\DD{{\boldsymbol D}}
\def\HH{{\boldsymbol H}}
\def\VV{{\boldsymbol V}}
\def\LL{{\boldsymbol L}}
\def\WW{{\boldsymbol W}}
\def\XX{{\boldsymbol  X}}
\def\YY{{\boldsymbol  Y}}
\def\Bmat{{\boldsymbol{\mathsf{B}}}}
\def\Emat{{\boldsymbol{\mathsf{E}}}}
\def\Mmat{{\boldsymbol{\mathsf{M}}}}
\def\Lmat{{\boldsymbol{\mathsf{L}}}}
\def\Wvec{\mathsf{W}}
\def\Dvec{\mathsf{D}}
\def\Fvec{\mathsf{F}}
\def\R {{\rm I}\hskip -0.85mm{\rm R}}
\newtheorem{theorem}{Theorem}
\newtheorem{lemma}[theorem]{Lemma}
\newtheorem{proposition}[theorem]{Proposition}
\newtheorem{remark}[theorem]{Remark}
\begin{document}

\title[Time-splitting approximation of nematic liquid crystal flows]{A projection-based time-splitting algorithm for approximating nematic liquid crystal flows with stretching}

\author{R.C. Cabrales}
\thanks{Departamento de Ciencias B\'asicas,
Universidad del B\'\i o-B\'\i o, Casilla 447, Chill\'an, Chile.
E-mail: {\tt roberto.cabrales@gmail.com}. Partially supported under Chilean grant
fondecyt 1140074.} 
\author{F. Guill\'en-Gonz\'alez}\thanks{Dpto.~E.D.A.N. and IMUS, Universidad 
de Sevilla, Aptdo.~1160, 41080 Sevilla, Spain. 
E-mail: {\tt guillen@us.es}. Partially supported by  Ministerio de Econom\'ia y Competitividad under Spanish grant MTM2015-69875-P with the participation of FEDER}
\author{J. V. Guti\'errez-Santacreu}\thanks{ Dpto. de 
Matem\'atica Aplicada I, Universidad de Sevilla, E.
T. S. I. Inform\'atica. Avda. Reina Mercedes, s/n. 41012 Sevilla,
Spain. {\tt juanvi@us.es}. Partially supported by  Ministerio de Econom\'ia y Competitividad  under Spanish grant MTM2015-69875-P with the participation of FEDER}

\maketitle

\begin{abstract} 
 A numerical method is developed for solving a system of partial differential equations modeling the flow of a nematic liquid crystal fluid with stretching effect, which takes into account the geometrical shape of its molecules. 
This system couples the velocity vector, the scalar pressure and the 
director vector representing the direction along which the molecules 
are oriented. The scheme is designed by using finite elements in space 
and a time-splitting algorithm to uncouple the calculation of the 
variables: the velocity and pressure are computed by using a 
projection-based algorithm and the director is computed jointly to 
an auxiliary variable. Moreover, the computation of this auxiliary 
variable can be avoided at the discrete level by using piecewise constant 
finite elements in its approximation. Finally, we use a pressure 
stabilization technique  allowing a stable equal-order interpolation 
for the velocity and the pressure. Numerical experiments concerning 
annihilation of singularities are presented to show the stability 
and efficiency of the scheme.

\end{abstract}

\noindent{\bf Mathematics Subject Classification:} Nematic liquid crystal; Finite elements; Projection method; Time-splitting method.

\noindent{\bf Keywords:} 35Q35, 65M60, 76A15


\section {Introduction}
For a long time we have all believed that matter only existed in three states: 
solid, liquid, and gas. However, liquid crystals are substances that combine 
features of both isotropic liquids and crystalline solids, exhibiting intermediate
 transitions between solid and liquid phases, called \emph{mesophases}. This behavior 
 is due, in part, to the fact that liquid crystals are made up of macromolecules 
 of similar size. Moreover it is well-known that the shape of the molecules plays 
 an important role in the flow regimes of liquid crystal fluids. Thus, in 
 general, different behaviors are expected for the dynamics of liquid crystal 
 fluids being constituted by molecules of different shapes. For instance, liquid
  crystal fluids built from disk-shaped molecules may exhibit strikingly 
  different properties from those composed of rod-shape materials. 

The mathematical theory for the hydrodynamics of liquid crystal fluids was 
initiated by Ericksen \cite{Ericksen1961, Ericksen1987}, and Leslie 
\cite{Leslie1979, Leslie1968}. Such a theory describes liquid crystals 
according to the different degrees of positional and orientational ordering of 
their molecules. The former refers to the relative position, on average, of 
the molecules or groups of molecules, while the latter alludes to the fact 
that the molecules tend to be locally aligned toward a specific and preferred
direction, described by a unit vector, called \emph{director}, defined according to 
the form of the molecules.

Within the liquid crystal phases, we find the so-called \emph{nematic} phase. In it, 
the molecules have no positional order, but they self-organize to have 
long-range directional order. Thus, the molecules flow freely and their center 
of mass positions are randomly distributed as in a liquid, while maintain their 
long-range directional order. The breakdown of the self-alignment causes the
 appearance of defects or singularities which are able to significantly influence 
 the flow behavior.  

In this paper we are interested in the numerical solution of the flow of a 
nematic liquid crystal governed by an Ericksen-Leslie-type system which incorporates  
the stretching effect. This stretching effect comes from the kinematic transport 
of the director field, which depends on the shape of the molecules.    

Let  $T>0$ be a fixed time and let $\Omega\subset\mathbb{R}^M, M=2$ or $3$, be 
a bounded open set with boundary $\partial\Omega$. Set $Q=\Omega\times(0,T]$ 
and $\Sigma=\partial\Omega\times(0,T]$. Then the equations are written as 
follows (see \cite{Lin2007}, \cite{Liu2007} for more physical background, derivation, and discussion):
\begin{equation}\label{LC}
\left\{
\begin{array}{rcl}
         \partial_t \dd+  (\u\cdot \nabla) \dd +\beta (\nabla\u)\dd
         +(1+\beta)(\nabla\u)^T\dd+\gamma ( \f(\varepsilon,\dd) -\delta\dd ) 
         & =& \boldsymbol{0}\quad\mbox{ in  $Q$,}
         \\
        \partial_t\u + (\u\cdot \nabla) \u - \nu \delta \u+\nabla p 
        + \lambda\nabla\cdot ((\nabla \dd)^T \nabla \dd)&&
        \\
        +\lambda \nabla\cdot (\beta (\f(\varepsilon,\dd)-\delta\dd)\dd^T
        +(1+\beta) \dd(\f(\varepsilon,\dd)-\delta\dd)^T)
        & =& \boldsymbol{0}\quad\mbox{ in $ Q$,}
        \\
        \nabla\cdot \,\u & = & 0\quad\mbox{ in $ Q$.}
\end{array}
\right.
\end{equation}
We complete this system with homogeneous Dirichlet conditions for the 
velocity field  (non-slip) 
and homogeneous Neumann  boundary conditions  for the director field:
\begin{equation}\label{boundary-conditions}
    \u(\x,t)=\boldsymbol{0}, \quad \partial_{\n}\dd(\x,t)= \boldsymbol{0}
    \quad\mbox{ for $(\x,t)\in\Sigma$,}
\end{equation}
and the initial conditions
\begin{equation}\label{initial-conditions}
    \dd(\textit{\textbf{x}},0)=\dd_0(\textit{\textbf{x}}),\quad \quad
    \u(\textit{\textbf{x}},0)=\u_0(\textit{\textbf{x}})\quad\mbox{ for
    $\x\in\Om$.}
\end{equation}

In system \eqref{LC}, $\u:\overline Q\to\mathbb{R}^M$ is the velocity of 
the liquid crystal flow, $p:\overline Q\to\mathbb{R}$ is the pressure, and 
$\dd:\overline Q\to\mathbb{R}^M$ is the  orientation of the molecules. 
The physical parameters $\nu$, $\lambda$ and $\gamma$ stand for positive 
constants which represent viscosity, elasticity, and relaxation time, 
respectively. The geometrical parameter $\beta\in [-1,0]$ is a constant 
associated with the aspect ratio of the ellipsoid particles. For instance, 
$\beta =-1/2, -1$ and $0$, corresponds to spherical, rod-like and disk-like 
liquid crystal molecules \cite{Jeffery1922},  respectively. Moreover, 
$\varepsilon>0$ is the penalty parameter and $\f(\varepsilon,\dd)$ is the 
penalty function related to the constraint $|\dd|=1$. This penalty term can 
also be physically meaningful and represents a possible extensibility 
of molecules. It is  defined by
\begin{equation}\label{Pen_func}
\f(\varepsilon,\dd)= \frac{1}{\varepsilon^{2}}\left(|\dd|^2-1\right)\dd.
\end{equation}
It should be noted that 
$\f(\varepsilon,\dd)= \nabla_\dd F(\varepsilon,\dd)$, for all $\dd\in \R^M$, 
for the following scalar potential function
$$
F(\varepsilon,\dd)=\frac{1}{4\varepsilon^2}(|\dd|^2-1)^2.
$$

The following energy law for system \eqref{LC} holds under some regularity
 assumptions for $\u$ and $\dd$ \cite{Lin2007, Liu2007}:
\begin{equation}\label{Ginzburg-Landau-Energy}
\frac{d}{dt}{\mathcal E}(\u, \dd)+\nu \int_\Omega |\nabla\u|^2+
\lambda\gamma \int_\Omega |-
\delta\dd+\f(\varepsilon,\dd)|^2= 0,
\end{equation}
where
\begin{equation}\label{energy}
{\mathcal E}(\u, \dd)=\frac{1}{2} \int_\Omega|\u|^2+\frac{\lambda}{2} 
\int_\Omega |\nabla\dd|^2+ \lambda\int_{\Omega}  F(\varepsilon,\dd).
\end{equation}
The energy \eqref{energy} expresses the competition between the kinetic 
and elastic energies. 
It should be noted that \eqref{Ginzburg-Landau-Energy} is independent of 
$\varepsilon$ and $\beta$. Moreover, it provides a uniform bound for 
$F(\varepsilon, \dd)$ which leads to the unit sphere constraint in the 
limit as $\varepsilon\to0$.

The mathematical structure of system \eqref{LC} consists of the Navier-Stokes
 equations with some extra stress tensors taking into account the geometric 
 of the molecules coupled with a gradient flow equation similar to that of 
 harmonic maps into the sphere. The numerical resolution of system \eqref{LC} 
 is a nontrivial task due to the nonlinear nature of the system, the coupling 
 terms between orientation and flow and the presence of the incompressibility
  constraint.   

It is well known that time discretizations for the Navier--Stokes equations 
based on implicit time strategies give rise to highly time-consuming linear 
solver steps due to the coupled computation of both velocity and pressure. Furthermore, choices of stable finite element pairings are restricted by the 
inf--sup constraint. As a result, projection-based time-splitting techniques
 \cite{Guermond2006} turn to be a very favorable alternative to cutting 
 down the computational cost of current iteration by successively updating 
 velocity and pressure. It is obvious that such a strategy is desirable 
 to solve system \eqref{LC} in order to decouple the pressure computation 
 from  the velocity field as well as the computation of the director field. 
 Instead, inf-sup conditions can be avoided by adding a pressure stabilizing 
 term at the projection step, such that equal interpolation spaces for 
 velocity and pressure are allowed. The stabilizing term is devised in 
 such a way that the stability and convergence rate are not compromised
  \cite{Burman2008}.      

There is an extensive literature on the mathematical analysis of finite 
element methods approximating system \eqref{LC} without stretching effect, 
which corresponds to neglecting all the terms involving the paramter $\beta$. 
The interested reader is referred to the works 
\cite{Liu2000, Liu2002, Becker2008, Girault2011, Badia2011, Badia2011_ii, Guillen2013, Cabrales2015}. Very little has appeared for system \eqref{LC} 
in the context of finite elements. In \cite{Lin2007}, Lin, Liu and Zhang 
presented a modified Crank-Nicolson scheme for which a discrete energy law 
is derived. As a solver, the author devised a fixed iterative method which 
gave rise to a matrix being symmetric and independent of time and the 
number of the fixed point iterations at each iteration. In \cite{Liu2007}, 
Liu, Lin and Zhang proposed a numerical algorithm based on a semi-implicit 
BDF2 rotational pressure-correction method for an axi-symmetric domain for 
studying the annihilation of a hedgehog-antihedgehog pair of defects. 
An extra term related to $\f(\varepsilon, \dd)$ was added so that the proposed 
scheme was somewhat unconditional, but no energy estimate was provided.  

The goal of this paper is to use the above-mentioned techniques for the 
Navier-Stokes equations so as to be able to potentially enhance the 
performance of previous algorithms for system \eqref{LC}. In particular, 
we look for a numerical scheme  using low-order finite-elements, which is 
linear at each time step and unconditionally stable and decouples the computation 
of the all primary variables.  Moreover, we investigate the interplay of the 
flow of a nematic fluid and the geometric shape of its molecules in several 
numerical experiments.

The outline of the rest of this paper is the following. In section 2 we establish the function spaces, the notation and the hypotheses used in the work. Then the new numerical method are introduced. In  section 3,  we prove a priori estimates for the algorithm which provides the unconditional energy-stability property. The paper finishes with section 4, where some details about the implementation and  numerical simulations that illustrate the performance of the scheme concerning the evolution of singularities are presented.

\section{The numerical algorithm}

\subsection{Notation}

For $1\le p \le \infty$, $L^p(\Omega)$ denotes the space of $p$th-power 
integrable real-valued functions defined on $\Omega$  for the Lebesgue 
measure. This space is a Banach space endowed with the norm
$\|v\|_{L^p(\Omega)}=(\int_{\Omega}|v(\x)|^p\,{\rm d}\x)^{1/p}$ for 
$1\le p <\infty$ or
$\|v\|_{L^\infty(\Omega)}={\rm ess}\sup_{\x\in \Omega}|v(\x)|$ for $p=\infty$. 
In particular, $L^2(\Omega)$ is a Hilbert space with 
the inner product
$$
\left(u,v\right)=\int_{\Omega}u(\x)v(\x){\rm d}\x,
$$
and its norm is simply denoted by $\|\cdot\|$.
For $m$ a non-negative integer, we denote the classical Sobolev spaces as 
$$
H^{m}(\Omega) = \{v \in
L^2(\Omega)\,;\, \partial^k v \in L^2(\Omega)\ \forall ~ |k|\le m\},
$$ 
associated to the norm 
$$
\|v\|_{H^{m}(\Omega)} =\left(\sum_{0\le |k| \le m} \|\partial^k
v\|^2\right)^{1/2}\,,
$$ 
where $k = (k_1,\ldots,k_M)\in{\mathds{N}^M}$ is a
multi-index and $|k| = \sum_{i=1}^M k_i$, which is 
a Hilbert space with the obvious inner
product. We will use boldfaced letters 
for spaces of vector functions and their elements, e.g. $\LL^2(\Omega)$ in 
place  of $L^2(\Omega)^M$.

Let $\mathcal{D}(\Omega)$ be the space of infinitely times
differentiable functions with compact support on $\Omega$. 
The closure of ${\mathcal D}(\Omega)$ in
$H^{m}(\Omega)$ is denoted by $H^{m}_0(\Omega)$. 
We will also make use of the following space of vector fields:
$$
\VV=\{\vv\in \boldsymbol{\mathcal{D}}(\Omega) : \nabla\cdot\vv=0 \mbox{ in } \Omega \}. 
$$
We denote by $\HH$ and $\VV$, the closures of $\boldsymbol{\mathcal{V}}$, 
in the $\LL^2(\Omega)$- and $\HH^1(\Omega)$-norm, respectively,   
which are characterized (for $\Omega$ being Lipschitz-continuous) by 
(see \cite{Temam2001})
\begin{eqnarray*}
\HH&=& \{ \u \in \LL^2(\Omega) : \nabla\cdot\u =0 \mbox{ in } 
\Omega, \u\cdot\boldsymbol{n} = 0 \hbox{ on }
\partial\Omega \},\\
\VV&=& \{\u \in \HH^1(\Omega) : \nabla\cdot\u =0 \mbox{ in } 
\Omega, \u = \boldsymbol{0} \hbox{ on } \partial\Omega \},
\end{eqnarray*}
where $\n$ is the outward normal to $\Omega$ on $\partial \Omega$.    
Finally, we consider 
$$
L^2_0(\Omega)= \{ p \in L^2(\Omega) : \ \int_\Omega
p(\x)\, d\x =0 \}.
$$

\subsection{Hypotheses}
Herein we introduce the hypotheses that will be required along this work.
\begin{enumerate}


\item [(H1)] Let $\Omega$ be a bounded domain of $\R^M$ 
 with a polygonal or polyhedral Lipschitz-continuous boundary.

\item[(H2)] Let $\{{\mathcal T}_{h}\}_{h>0}$  be a family of regular, 
quasi-uniform subdivisions of 
$\overline{\Om}$ made up of triangles or quadrilaterals  in two dimensions 
and tetrahedra or hexahedra in three dimensions, so that 
$\overline \Omega=\cup_{K\in {\mathcal T}_h}K$. 

\item [(H3)] Assume three sequences of finite-dimensional spaces 
$\{\dd_h\}_{h>0}$, $\{\VV_h\}_{h>0}$ and $\{Q_h\}_{h>0}$ associated with 
$\{{\mathcal T}_{h}\}_{h>0}$ such that $\DD_h\subset \HH^1(\Omega)$, 
$\VV_h\subset \HH^1_0(\Omega)$ and $P_h\subset H^1(\Omega)\cap L^2_0(\Omega)$. 
Also,  consider an extra finite-element space $\{\WW_h\}_{h>0}$ with 
$\WW_h\subset \LL^2(\Omega)$.   
\item [(H4)]  Suppose that $(\u_0, \dd_0)\in \HH\times \HH^1(\Omega)$ with 
$|\dd_0|\le1$ a.e. in $\Omega$.
\end{enumerate}
\medskip

In particular, hypothesis $\rm (H3)$ allows us to consider equal-order 
finite-element spaces for velocity and pressure. For instance,  let  
$\mathcal{P}_1(K)$ be the set of linear polynomials on a triangle or 
tetrahedron $K$. Thus the space of continuous, piecewise polynomial 
functions  associated to ${\mathcal T}_h$  is denoted as
$$
X_h = \left\{ v_h \in {C}^0(\overline\Omega) \;:\; 
v_h|_K \in \mathcal{P}_1(K), \  \forall K \in \mathcal{T}_h \right\},
$$
and the set of piecewise constant functions as 
$$
Y_h =\{ w_h \in L^\infty(\Omega) \;:\; w_h|_K \in \mathbb{ R},
\ \forall K\in {\mathcal T}_h\}.
$$
We choose the following continuous finite-element spaces 
$$\dd_h=\XX_h,\quad \VV_h=\XX_h\cap\HH^1_0(\Omega)\quad\hbox{and}\quad 
P_h=X_h\cap L^2_0(\Omega),
$$
  for approximating the director, the velocity and the pressure, respectively.
Additionally,  we select the extra discontinuous finite-element $\WW_h=\YY_h$
 to be the   space for an auxiliary variable related to the vector director.

Observe that our choice of the finite-element spaces for velocity and  
pressure does not satisfy the discrete inf-sup condition
\begin{equation}\label{LBB-condition}
\|p_h\|_{L^2_0(\Om)}\le \alpha \sup_{\vv_h\in\VV_h\setminus
\{0\}}\frac{\Big(q_h,\nabla\cdot \vv_h\Big)}{\|\vv_h\|_{H^1(\Om)}}
\quad \forall\, p_h\in P_h,
\end{equation}
for $\alpha>0$  independent of $h$. 

The following proposition is concerned with an interpolation operator 
$I_h$ associated with the space $\dd_h$. In fact, we can think of $I_h $ as 
the Scott-Zhang interpolation operator, see \cite{Scott-Zhang}.

\begin{proposition} Assuming hypotheses $\rm (H1)$-$\rm (H3)$, there 
exists $I_h:\HH^1(\Omega)\to \dd_h$ an interpolation operator 
 satisfying 
\begin{equation}\label{interp_error_Ih}
\|\dd-I_h \dd\| \leq C_{app}\, h\|\nabla\dd\|\quad
\forall\,\dd\in\HH^1(\Omega),
\end{equation}
\begin{align}
\label{stabLinf}
\|I_h\dd\|_{\LL^\infty(\Omega)}\le C_{sta} \|\dd\|_{\LL^\infty(\Omega)} 
\quad\forall\,\dd\in \LL^\infty(\Omega),
\\
\label{stabH1}
\|I_h\dd\|_{\HH^1(\Omega)}\le C_{sta} \|\dd\|_{\HH^1(\Omega)} 
\quad\forall\,\dd\in \HH^1(\Omega),
\end{align}
where $C_{app}>0$ and $C_{sta}>0$ are constants independent of $h$. 
\end{proposition}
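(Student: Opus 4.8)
The plan is to take $I_h$ to be the Scott--Zhang quasi-interpolation operator \cite{Scott-Zhang} associated with the space $\dd_h=\XX_h$, and to verify that its well-known local properties yield the three global estimates. Since the director field carries only homogeneous Neumann (natural) boundary conditions, no essential boundary constraint has to be preserved, so the construction is unconstrained: to each Lagrange node $\x_i$ of $\mathcal{T}_h$, with scalar nodal basis function $\phi_i$, we associate a facet or element $\sigma_i$ of positive $(M-1)$- or $M$-dimensional measure containing $\x_i$, together with the $L^2(\sigma_i)$-dual functions $\psi_i$ determined by $\int_{\sigma_i}\psi_i\,\phi_j=\delta_{ij}$ for all nodes $\x_j$ lying in $\sigma_i$. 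We then set, acting componentwise on vector fields,
\begin{equation*}
I_h\dd=\sum_i\Big(\int_{\sigma_i}\psi_i\,\dd\Big)\,\phi_i.
\end{equation*}
By construction $I_h$ reproduces $\dd_h$, hence in particular it reproduces $\mathcal{P}_1$ (or $\mathcal{Q}_1$) locally on the patch $\omega_K=\bigcup\{K'\in\mathcal{T}_h:\overline{K'}\cap\overline{K}\neq\emptyset\}$ of each element $K$.

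First I would establish the local estimates by the usual scaling argument. Mapping $K$ and its patch $\omega_K$ to a reference configuration via the affine (or multilinear) equivalence guaranteed by the regularity and quasi-uniformity of $\{\mathcal{T}_h\}_{h>0}$ in hypothesis $\rm(H2)$, and using that the dual functionals scale like the reciprocal of $|\sigma_i|$, one obtains the local stability bounds $\|I_h\dd\|_{\LL^2(K)}\le C\,\|\dd\|_{\LL^2(\omega_K)}$ and $\|\nabla I_h\dd\|_{\LL^2(K)}\le C\,\|\dd\|_{\HH^1(\omega_K)}$, with $C$ depending only on the shape-regularity constants. Combining the local $\LL^2$ stability with the local reproduction of $\mathcal{P}_1$ and the Bramble--Hilbert (Deny--Lions) lemma yields the local approximation estimate $\|\dd-I_h\dd\|_{\LL^2(K)}\le C\,h_K\,\|\nabla\dd\|_{\LL^2(\omega_K)}$. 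Because each element belongs to only a bounded number of patches $\omega_{K'}$ (again by shape-regularity) and $h_K\le h$ for all $K$ by quasi-uniformity, summing the squares of the local estimates over $K\in\mathcal{T}_h$ produces the global bounds \eqref{interp_error_Ih} and \eqref{stabH1}.

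For the $\LL^\infty$ estimate \eqref{stabLinf}, which is the most delicate point, I would argue pointwise. Fixing $\x\in K$, one has
\begin{equation*}
|I_h\dd(\x)|\le\sum_{i:\,\x_i\in \overline{K}}\Big(\int_{\sigma_i}|\psi_i|\Big)\,\|\dd\|_{\LL^\infty(\sigma_i)}\,|\phi_i(\x)|.
\end{equation*}
The normalization of the dual functionals gives $\int_{\sigma_i}|\psi_i|\le C$ uniformly in $h$ (the magnitude of $\psi_i$ scales like $|\sigma_i|^{-1}$), the nodal basis functions satisfy $|\phi_i(\x)|\le C$, and each element carries only a uniformly bounded number of nodes. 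Hence $|I_h\dd(\x)|\le C\,\|\dd\|_{\LL^\infty(\omega_K)}\le C\,\|\dd\|_{\LL^\infty(\Omega)}$, and taking the essential supremum over $\Omega$ gives \eqref{stabLinf}.

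The main obstacle is ensuring that all the constants in the scaling arguments --- especially the normalization $\int_{\sigma_i}|\psi_i|\le C$ underlying the $\LL^\infty$ bound --- are genuinely independent of $h$ and of the element type. This is where hypothesis $\rm(H2)$ is essential: shape-regularity controls the reference-to-physical Jacobians uniformly, and quasi-uniformity guarantees that $|\sigma_i|$ and $h_K$ are comparable across the whole mesh, so that the reference-element estimates transfer with $h$-independent constants even when simplices are replaced by quadrilaterals or hexahedra as permitted by $\rm(H2)$.
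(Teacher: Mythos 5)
Your proposal is correct and follows essentially the same route as the paper, which simply invokes the Scott--Zhang operator of \cite{Scott-Zhang} for this proposition without writing out a proof; you supply the standard dual-functional construction, local polynomial reproduction, and scaling arguments that the paper leaves implicit. The one point to pin down is that your local bound $\|I_h\dd\|_{\LL^2(K)}\le C\|\dd\|_{\LL^2(\omega_K)}$ and the pointwise $\LL^\infty$ estimate via $\|\dd\|_{\LL^\infty(\sigma_i)}$ genuinely require each $\sigma_i$ to be a full element rather than an $(M-1)$-dimensional facet (a facet choice forces a trace term, hence an $\HH^1(\omega_K)$ norm on the right and an operator undefined on bare $\LL^\infty$); since the director carries only natural boundary conditions this element choice is available at every node, as your opening remark already suggests, so you should simply commit to it.
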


\subsection{Description of the scheme}\label{seccionAlgoritmo} As explained 
in the introduction, we aim to construct  a numerical solution to system 
\eqref{LC} that, at each time step, one only needs to solve a sequence of 
decoupled elliptic equations for director, velocity and pressure. 
In particular, the linear systems associated to director and pressure are 
symmetric; therefore, scalable parallel solvers can be defined. Instead, 
the linear system associated to velocity is block diagonal, which means 
that each component of velocity can be computed in parallel.     
This makes our time-splitting be very appealing for high performance computing.  
  
The starting point to design our time-splitting method is the non-incremental
 velocity-correction method for the Navier-Stokes equations. Moreover, 
 it is also rather standard to take an essentially quadratic  truncated potential 
 $\widetilde F(\varepsilon,\dd)$ instead of the ``quartic'' potential 
 $F(\varepsilon,\dd)$.  To be more precise, one considers
\begin{equation}\label{Truncated-Pontential-fun}
\widetilde F(\varepsilon,\dd)=\frac{1}{\varepsilon^2}
\begin{cases}
\displaystyle
\frac{1}{4}(|\dd|^2-1)^2, &\mbox{ if } |\dd|\le 1,\\
\displaystyle
(|\dd|-1)^2, &\mbox{ if } |\dd|> 1,
\end{cases}
\end{equation}
for which
$$
\widetilde\f(\varepsilon,\dd)=\nabla_\dd \widetilde F(\varepsilon,\dd)= 
 \frac{1}{\varepsilon^2}
\begin{cases}
\displaystyle
(|\dd|^2-1) \dd, &\mbox{ if } |\dd|\le 1,\\
\displaystyle
2 (|\dd|-1)\frac{\dd}{|\dd|}, &\mbox{ if } |\dd|> 1.
\end{cases}
$$

Let $N\in\mathds{N}$ and let $k=T/N$ denote the time-step size. To start 
up the sequence of approximation solutions, we consider  
\begin{equation}\label{initial-d0}
\dd_{0h}=\mathcal{I}_h\dd_0
\end{equation} 
and  $(\u_{0h},p_{0h})\in \VV_h\times P_h$ such that
\begin{subequations}\label{initial-u0}
\begin{empheq}[left=\empheqlbrace]{align}
(\u_{0h}, \bar \u_h)+(\nabla p_{0h}, \bar\u_h )&=(\u_0, \bar\u_h ),
\label{initial-u0eq1}\\
(\nabla\cdot\u_{0h}, \bar p_h)+j( p_{0h}, \bar p_h)&=0,\label{initial-u0eq2}
\end{empheq}
\end{subequations}
for all $(\bar\u_h, \bar p_h)\in \VV_h\times P_h$. It should be noted that 
$\|\u_{0h}\|\le C \|\u_0\|$ holds.
 
Then the algorithm reads as follows. Let 
$(\dd^{n}_h, \u^{n}_h)\in \dd_h\times \VV_{h}$ be 
given. For the $n+1$ time step, do the following steps: 
\begin{enumerate}
\item Find $(\dd^{n+1}_h, \w^{n+1}_h)\in \dd_h\times\WW_h$ satisfying 
\begin{equation}\label{scheme3eq1}
\left\{
\begin{array}{rcl}
\displaystyle
\left(\frac{\dd^{n+1}_h-\dd^n_h}{k},\bar\w_h\right) + (\u^\star_h, (\nabla\dd^n_h)^T \bar\w_h)
- \beta(\u^{\star\star}_h, \nabla\cdot (\bar\w_h (\dd^n_h)^T))&&
\\
-(1+\beta)(\u^{\star\star\star}_h, \nabla\cdot (\dd^n_h \bar\w^T_h) )
+\gamma( \w^{n+1}_h, \bar\w_h)&=&0,
\\
\displaystyle(\nabla \dd^{n+1}_h, \nabla \bar  \dd_h)+(\widetilde \f_\varepsilon(\dd^n_h)+
\frac{H_F}{2\varepsilon^2}(\dd^{n+1}_h-\dd^n_h), \bar\dd_h)-(\w^{n+1}_h, \bar\dd_h)=0,
\end{array}
\right.
\end{equation}
for all $(\bar\dd_h, \bar\w_h)\in \DD_h\times\WW_h$, where (see 
\eqref{constanteHf} below)
$$
H_{F}:=(M3^2+(M^2-M)2^2)^{1/2}
$$
and
\begin{eqnarray*}
\u^\star_h&=&\u^n_h+3\lambda\, k\, (\nabla\dd_h^n)^T \w^{n+1}_h,\\
\u^{\star\star}_h&=&\u^{n}_h-3\lambda \beta k \,\nabla\cdot (\w^{n+1}_h (\dd^n_h)^T),
\\
\u^{\star\star\star}_h&=&\u^{n}_h-3\lambda (1+\beta) k \,\nabla\cdot 
(\dd^{n}_h (\w^{n+1}_h)^T).
\end{eqnarray*}
\item Find $p^{n+1}_h\in  P_{h}$ satisfying
\begin{equation}\label{scheme3eq3a}
k(\nabla p^{n+1}_h, \nabla \bar  p_h)+ j (p^{n+1}_h, \bar p_h)
-(\widetilde\u^{n+1}_h,\nabla\bar p_h)=0,
\end{equation}
for all $\bar p_h\in P_h$, with
\begin{eqnarray*}
\widetilde\u^{n+1}_h&=&\frac{\u^\star_h+\u^{\star\star}_h+\u^{\star\star\star}_h}{3}
\\
&=&\u^n_h+\lambda\, k \, \Big((\nabla\dd_h^n)^T \w^{n+1}_h - 
 \beta \nabla\cdot (\w^{n+1}_h (\dd^n_h)^T)
 -(1+\beta)  \nabla\cdot (\dd^{n}_h (\w^{n+1}_h)^T) \Big)
\end{eqnarray*}
and
$$
j(p^{n+1}_h, \bar p_h)=S \frac{1}{\nu} (p^{n+1}_h-\pi_0( p^{n+1}_h), 
\bar p_h-\pi_0( \bar p_h)),
$$
where $S>0$ is an algorithmic constant, and  $\pi_0$  is the 
$L^2$-orthogonal projection operator onto $Y_h$.

\item Find $\u_n^{n+1}\in \VV_h$ satisfying 
\begin{equation}\label{scheme3eq2}
\begin{array}{l}
\displaystyle \left(\frac{\u^{n+1}_h-\u^n_h}{k},\bar\u_h\right)+
c(\u^n_h, \u^{n+1}_h,\bar\u_h)
+\nu(\nabla\u^{n+1}_h,\nabla\bar\u_h)
+(\nabla p_h^{n+1}, \bar\u_h )\\
+(- \lambda (\nabla\dd^n_h)^T \w^{n+1}_h+\lambda\beta \,\nabla\cdot 
(\w^{n+1}_h (\dd^n_h)^T)+\lambda (1+\beta) \,\nabla\cdot 
(\dd^{n}_h (\w^{n+1}_h)^T), \bar\u_h)= 0, 
\end{array}
\end{equation}
for all $\bar\u_h\in \VV_h.$
\end{enumerate}

To enforce the skew-symmetry of the trilinear convective term in 
\eqref{scheme3eq2}, we have defined
$$
c(\u_h,\vv_h,\w_h)=((\u_h\cdot\nabla)\vv_h, \w_h)+
\frac{1}{2} (\nabla\cdot \u_h, \vv_h\cdot	\w_h)
$$   
for all $\u_h, \vv_h, \w_h\in \VV_h$. Thus, 
$c(\u_h,\vv_h,\vv_h)=0$ for all $\u_h, \vv_h\in \VV_h$.  

Since scheme \eqref{scheme3eq1}-\eqref{scheme3eq2} is linear, it suffices 
to prove its uniqueness, which follows easily by comparing two solutions
 \cite{Cabrales2015}.

The main characteristic of scheme \eqref{scheme3eq1}-\eqref{scheme3eq2} 
is that the approximations $(\dd^{n+1}_h, \w^{n+1}_h)$, $p^{n+1}_h$ and  
$\vv^{n+1}_h$ are performed successively. The use of the auxiliary variable 
$\w^{n+1}_h$ is just to be able to derive a priori energy estimates, 
although the computation of $\w^{n+1}_h$ can be avoided  as will be seen 
in section \ref{Numerical_results}.       

Concerning the stabilizing term $j(\cdot,\cdot)$, other choices are 
feasible if  one wants to improve the spatial convergence rate: 
$$
j(p_h,\bar p_h)= S \frac{h^2}{\nu} (\nabla p_h-\pi(\nabla p_h), \nabla \bar p_h-
\pi(\nabla \bar p_h)),
$$
where $\pi$ could be the $\LL^2(\Omega)$-orthogonal protection operator 
onto $\YY_h$ \cite{Codina2001} or the Scott-Zhang operator into $\YY_h$
 \cite{Badia2012}. The latter is more appealing since no auxiliary 
 variable is required to compute it.   
 
\section{A priori energy estimates}

Let us begin by noting that the  $ij$-component of the Hessian matrix of 
the truncated potential
$\widetilde F(\varepsilon,\dd)$ with  respect to $\dd$ is given by
$$
H_\dd \widetilde F(\varepsilon,\dd)_{ij}= \frac{1}{\varepsilon^2}
\begin{cases}
\displaystyle 2 d_i d_j + (|\dd|^2-1)\delta_{ij} , &\mbox{ if } |\dd|\le 1,
\\
\displaystyle
2\frac{d_id_j}{|\dd|^3} + 
 2\frac{|\dd|-1}{|\dd|} \delta_{ij}, &\mbox{ if } |\dd|> 1.
\end{cases}
$$
We thus have 
\begin{equation}\label{approx-potential}
\begin{array}{rl}
\widetilde F(\varepsilon,\dd^{n+1})-\widetilde F(\varepsilon,\dd^n)&=
\nabla_\dd \widetilde F(\varepsilon,\dd^n) \cdot (\dd^{n+1}-\dd^n)\\
&+\displaystyle
\frac{1}{2}(\dd^{n+1}-\dd^n)^TH_\dd \widetilde 
F(\varepsilon,\dd^{n+\theta})(\dd^{n+1}-\dd^n),
\end{array}
\end{equation}
where $\dd^{n+\theta}=\theta \dd^{n+1}+(1-\theta)\dd^n$ for some 
$\theta\in (0,1)$. Since $\widetilde F(\varepsilon,\cdot)$ is 
essentially quadratic, each component of 
the associated Hessian is uniformly bounded as 
$$
\|H_\dd  \widetilde F(\varepsilon,\cdot)_{i j}\|^2_{L^\infty( \R^M)}\le 
\frac{1}{\varepsilon^2} (2+\delta_{ij}).
$$
Hence, the Frobenius norm is bounded as  
\begin{equation}\label{constanteHf}
\left(\sum_{i,j}^{M}\|H_\dd  \widetilde 
F(\varepsilon,\cdot)_{i j}\|^2_{L^\infty( \R^M)}\right)^{1/2}
\le \frac{1}{\varepsilon^2}H_F,\mbox{ where }H_{F}=(M3^2+(M^2-M)2^2)^{1/2}.
\end{equation}
 In particular, using the consistence of the Frobenius norm gives
\begin{equation}\label{bound-frobenius}
\frac{1}{2}(\dd^{n+1}-\dd^n)^TH_\dd \widetilde 
F(\varepsilon,\dd^{n+\theta})(\dd^{n+1}-\dd^n)\le 
\frac{H_F}{2\varepsilon^2}|\dd^{n+1}-\dd^n|^2. 
\end{equation}

Consequently,  by adding to  $\widetilde\f(\varepsilon,\dd^n)$  a large 
enough first-order linear dissipation term, 
$\displaystyle\widetilde\f(\varepsilon,\dd^n)+
\frac{H_F}{2\varepsilon^2}(\dd^{n+1}-\dd^n)$, we have, from 
\eqref{approx-potential} and 
\eqref{bound-frobenius},  
\begin{equation}\label{uncond-stab}
\left(\widetilde\f(\varepsilon,\dd^n)
+\frac{H_F}{ 2 \varepsilon^2}(\dd^{n+1}-\dd^n)\right)\cdot 
(\dd^{n+1}-\dd^n) \ge \widetilde 
F(\varepsilon,\dd^{n+1})-\widetilde F(\varepsilon,\dd^n).
\end{equation}
This inequality will play an essential role for the energy-stability 
of scheme \eqref{scheme3eq1}.
To prove this, we denote the discrete energy as
$$
{\mathcal E}(\u_h, \dd_h)=\frac{1}{2}\|\u_h\|^2+\frac{\lambda}{2} \|\nabla\dd_h\|^2+ 
\lambda\int_{\Omega} \widetilde F(\varepsilon,\dd_h).
$$
%
%
We are now in a position to prove the following result concerning 
a local-in-time discrete energy estimate. 
\begin{lemma}\label{le:induction} Under hypotheses $(\rm H1)$--$(\rm H4)$, 
it follows that, for any $k>0$, $h>0$ and $\varepsilon>0$, the 
corresponding solution
$(\u^{n+1}_h,p^{n+1}_h,\dd^{n+1}_h,\w^{n+1}_h)$ of scheme 
\eqref{scheme3eq1}-\eqref{scheme3eq2} satisfies the following inequality:
\begin{equation}\label{induction}
\begin{array}{rl}
\displaystyle
{\mathcal E}(\u^{n+1}_h,\dd^{n+1}_h)-{\mathcal E}(\u^{n}_h,\dd^{n}_h) 
+k\left(\nu\|\nabla\u_h^{n+1}\|^2 
+\lambda\gamma \|\w^{n+1}_h\|^2\right)
+\frac{\lambda}{2}\|\nabla(\dd^{n+1}_h-\dd^n_h)\|^2&\\
\displaystyle
+\frac{1}{2}\Big(\|\u_h^{n+1}-\widehat\u^{n+1}_h\|^2
+\|\widehat\u_h^{n+1}-\widetilde\u^{n+1}_h\|^2 \Big)
+k\,j(p^{n+1}_h, p^{n+1}_h)&
\\
 \displaystyle
+\frac{1}{2}\frac{\|\widetilde\u^{n+1}_h-\u^\star_h\|^2
+ \|\widetilde\u^{n+1}_h-\u^{\star\star}_h\|^2
+\|\widetilde\u^{n+1}_h-\u^{\star\star\star}_h\|^2}{3}&
\\
 \displaystyle
+\frac{1}{2}\frac{
\|\u^\star_h-\u^n_h\|^2
+\|\u^{\star\star}_h-\u^n_h\|^2
+\|\u^{\star\star\star}_h-\u^n_h\|^2}{3}
 &\le 0 .
\end{array}
\end{equation}

\end{lemma}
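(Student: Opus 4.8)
The plan is to derive the energy inequality by testing each of the three steps of the scheme with carefully chosen test functions, then adding the resulting identities so that the coupling terms between the director and the velocity cancel. The overall structure mirrors the continuous energy law \eqref{Ginzburg-Landau-Energy}: the kinetic term will come from the velocity step, the Dirichlet elastic term from the director step, and the potential term from inequality \eqref{uncond-stab}.

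First I would treat the director step \eqref{scheme3eq1}. In the first equation I would take $\bar\w_h = \w^{n+1}_h$, and in the second equation I would take $\bar\dd_h = \dd^{n+1}_h - \dd^n_h$. The elastic term $(\nabla\dd^{n+1}_h,\nabla(\dd^{n+1}_h-\dd^n_h))$ yields, via the identity $(a,a-b)=\tfrac12(|a|^2-|b|^2+|a-b|^2)$, the difference $\tfrac{\lambda}{2}(\|\nabla\dd^{n+1}_h\|^2-\|\nabla\dd^n_h\|^2)$ plus the dissipative term $\tfrac{\lambda}{2}\|\nabla(\dd^{n+1}_h-\dd^n_h)\|^2$. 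The penalty term, combined with the Hessian correction $\tfrac{H_F}{2\varepsilon^2}(\dd^{n+1}_h-\dd^n_h)$, is bounded below by $\lambda\int_\Omega(\widetilde F(\varepsilon,\dd^{n+1}_h)-\widetilde F(\varepsilon,\dd^n_h))$ using precisely \eqref{uncond-stab}. The term $(\w^{n+1}_h,\dd^{n+1}_h-\dd^n_h)$ couples the two equations; after multiplying appropriately by $\lambda$ and substituting the definitions of $\u^\star_h,\u^{\star\star}_h,\u^{\star\star\star}_h$, the transport and stretching terms should reorganize so that the explicit $3\lambda k$-contributions appear as the squared telescoping differences $\|\u^\star_h-\u^n_h\|^2$, etc., displayed in the last line of \eqref{induction}.

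Next I would handle the velocity step \eqref{scheme3eq2} by testing with $\bar\u_h=\u^{n+1}_h$. The convective term vanishes since $c(\u^n_h,\u^{n+1}_h,\u^{n+1}_h)=0$ by the skew-symmetry noted after the scheme; the viscous term gives $\nu\|\nabla\u^{n+1}_h\|^2$; and the discrete time derivative gives $\tfrac{1}{2k}(\|\u^{n+1}_h\|^2-\|\u^n_h\|^2+\|\u^{n+1}_h-\widehat\u^{n+1}_h\|^2)$ once the pressure and velocity-correction structure is accounted for. Here I would introduce the intermediate velocity $\widehat\u^{n+1}_h$ (the non-incremental velocity-correction split) so that the pressure gradient term $(\nabla p^{n+1}_h,\u^{n+1}_h)$ is reconciled against the projection step \eqref{scheme3eq3a}. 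Testing the projection step \eqref{scheme3eq3a} with $\bar p_h=p^{n+1}_h$ produces the stabilization term $k\,j(p^{n+1}_h,p^{n+1}_h)$ and links $\widetilde\u^{n+1}_h$ to $p^{n+1}_h$; the standard Pythagorean identities for the Helmholtz-type decomposition then generate the split squared terms $\tfrac12(\|\u^{n+1}_h-\widehat\u^{n+1}_h\|^2+\|\widehat\u^{n+1}_h-\widetilde\u^{n+1}_h\|^2)$ and the averaged differences $\|\widetilde\u^{n+1}_h-\u^\star_h\|^2$, etc. Crucially, the stretching terms in \eqref{scheme3eq2} are the exact same coupling terms (with opposite sign) that appeared in the director step after testing with $\w^{n+1}_h$, so upon adding everything the coupling cancels and leaves $\lambda\gamma\|\w^{n+1}_h\|^2$ as the only surviving relaxation dissipation.

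I expect the main obstacle to be the bookkeeping of the velocity-correction splitting: correctly identifying $\widehat\u^{n+1}_h$ and $\widetilde\u^{n+1}_h$ and verifying that the three auxiliary velocities $\u^\star_h,\u^{\star\star}_h,\u^{\star\star\star}_h$ combine — via their defining relations and the arithmetic mean $\widetilde\u^{n+1}_h=(\u^\star_h+\u^{\star\star}_h+\u^{\star\star\star}_h)/3$ — to produce exactly the two groups of averaged squared differences in \eqref{induction} while simultaneously cancelling the stretching coupling against the director step. The algebra of the three stretching terms $\beta$, $(1+\beta)$ and the transport term is delicate because each auxiliary velocity carries a different one of these contributions; I would use repeatedly the identity $2(a-b,a)=\|a\|^2-\|b\|^2+\|a-b\|^2$ together with the convexity/parallelogram identity for the average $\tfrac{1}{3}\sum_i\|a-a_i\|^2 = \|a-\bar a\|^2 + \tfrac13\sum_i\|\bar a - a_i\|^2$ to split the quadratic terms correctly. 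Once these cancellations are verified term by term and all the nonnegative squared quantities are moved appropriately, summing the three tested identities yields \eqref{induction} directly.
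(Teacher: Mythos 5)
Your proposal is correct and follows essentially the same route as the paper's proof: test the director pair with $\lambda k\,\w^{n+1}_h$ and $\lambda(\dd^{n+1}_h-\dd^n_h)$ together with \eqref{uncond-stab}, test the velocity and projection steps with $k\,\u^{n+1}_h$ and $p^{n+1}_h$ via the auxiliary velocity $\widehat\u^{n+1}_h=\widetilde\u^{n+1}_h-k\nabla p^{n+1}_h$, exploit the averaging identity for $\widetilde\u^{n+1}_h$ and the defining relations of $\u^\star_h,\u^{\star\star}_h,\u^{\star\star\star}_h$ to generate the squared differences, and add everything so the three stretching couplings cancel. All the key ingredients and cancellations you identify are exactly those used in the paper.
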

\begin{proof} 
We take $\bar\w_h=\lambda\,k\,\w^{n+1}_h$ in $\eqref{scheme3eq1}_a$ 
and $\bar\dd_h=\lambda(\dd^{n+1}_h-\dd^n_h)$ 
in $\eqref{scheme3eq1}_b$ and use  \eqref{uncond-stab} to obtain  
\begin{equation}\label{aux4}
\begin{array}{rl}
\displaystyle
\frac{\lambda}{2}\left(\|\nabla\dd^{n+1}_h\|^2-\|\nabla\dd^n_h\|^2
+\widetilde F(\varepsilon,\dd^{n+1}_h)-\widetilde F(\varepsilon,\dd^n_h)\right)
+\lambda\,\gamma \,k\|\w^{n+1}_h\|^2&\\
\displaystyle
+\frac{\lambda}{2}\|\nabla(\dd^{n+1}_h-\dd^{n}_h)\|^2
+\lambda\, k ((\u^\star_h,(\nabla\dd^{n}_h)^T\w^{n+1}_h)
\\
- \lambda\, k\,\beta\, (\u^{\star\star}_h, \nabla\cdot (\w_h^{n+1} (\dd^n_h)^T))
-\lambda\, k\,(1+\beta)\,(\u^{\star\star\star}_h, \nabla\cdot 
(\dd^n_h (\w^{n+1}_h)^T )&\le 0.
\end{array}
\end{equation}
Next we take $\bar\u_h=k\, \u^{n+1}_h$, as a test function, in \eqref{scheme3eq2}
and introduce the auxiliary velocity  
$\widehat\u^{n+1}_h= \widetilde\u^{n+1}_h -k\,\nabla p^{n+1}_h$ to obtain 
\begin{equation}\label{ecuacion1}
 \frac{1}{2}\|\u^{n+1}_h\|^2
-\frac{1}{2}\|\utecho^{n+1}_h\|^2
+\frac{1}{2}\|\u_h^{n+1}-\utecho^{n+1}_h\|^2
+\nu\,k \|\nabla \u_h^{n+1}\|^2= 0.
\end{equation}
Now we take  $\bar p_h=p^{n+1}_h$  in \eqref{scheme3eq3a} and use 
the auxiliary velocity $\widehat\u^{n+1}_h$ introduced previously to obtain
\begin{equation}\label{ecuacion2}
\frac{1}{2}\| \widehat\u^{n+1}_h\|^2 - \frac{1}{2}\|\widetilde\u^{n+1}_h\|^2+
\frac{1}{2}\| \widehat\u^{n+1}_h -  
\widetilde\u^{n+1}_h \|^2+k\,j(p^{n+1}_h, p^{n+1}_h)=0.
\end{equation}

From the definition of $\widetilde\u^{n+1}_h$ in \eqref{scheme3eq3a}, we write
$$
\frac{\widetilde\u^{n+1}_h-\u^\star_h}{3}
+\frac{\widetilde\u^{n+1}_h-\u^{\star\star}_h}{3}
+\frac{\widetilde\u^{n+1}_h-\u^{\star\star\star}_h}{3}=0.
$$
Hence, 
\begin{equation}\label{ecuacion3}
 \frac{1}{2}\|\widetilde\u^{n+1}_h\|^2
 -\frac{1}{2}\frac{\|\u^\star_h\|^2+\|\u^{\star\star}_h\|^2+
 \|\u^{\star\star\star}_h\|^2}{3}
 +\frac{1}{2}\frac{\|\widetilde\u^{n+1}_h-\u^\star_h\|^2
 +\|\widetilde\u^{n+1}_h-\u^{\star\star}_h\|^2
 +\|\widetilde\u^{n+1}_h-\u^{\star\star\star}_h\|^2}{3}=0.
\end{equation}
Moreover, from the definitions of $\u^\star_h$, $\u^{\star\star}_h$ 
and $\u^{\star\star\star}_h$ in $\eqref{scheme3eq1}_a$, we deduce the 
following equalities:
\begin{eqnarray}
 \frac{1}{6}\| \u^\star_h \|^2
-\frac{1}{6}\| \u^n_h \|^2
+\frac{1}{6}\| \u^\star_h -\u^n_h\|^2
- \lambda\, k \, ((\nabla\dd^n_h)^T \w^{n+1}_h,\u^\star_h)&=& 0,\label{ecuacion4}\\
 \frac{1}{6}\| \u^{\star\star}_h \|^2
-\frac{1}{6}\| \u^n_h \|^2
+\frac{1}{6}\| \u^{\star\star}_h -\u^n_h\|^2
+ \lambda\, k \,\beta\, (\u^{\star\star}_h, \nabla\cdot (\w_h^{n+1} (\dd^n_h)^T))
&=& 0,\label{ecuacion5}\\
 \frac{1}{6}\| \u^{\star\star\star}_h \|^2
-\frac{1}{6}\| \u^n_h \|^2
+\frac{1}{6}\| \u^{\star\star\star}_h -\u^n_h\|^2
+ \lambda\, k \,(1+\beta)\,(\u^{\star\star\star}_h, \nabla\cdot 
(\dd^n_h (\w^{n+1}_h)^T )
&=& 0,\label{ecuacion6}
\end{eqnarray}
Adding equalities \eqref{ecuacion1}-\eqref{ecuacion6} leads to 
\begin{equation}\label{aux1}
\begin{array}{rl}
\displaystyle
 \frac{1}{2}\|\u^{n+1}_h\|^2 - \frac{1}{2}\|\u^n_h\|^2
 +\nu\,k \| \nabla\u_h^{n+1}\|^2
 + \lambda\, k \, ((\nabla\dd^n_h)^T \w^{n+1}_h,\u^\star_h)&\\
 \displaystyle
+\frac{1}{2}\left(\|\u_h^{n+1}-\utecho^{n+1}_h\|^2
+\| \widehat\u^{n+1}_h - \widetilde \u^{n+1}_h \|^2\right)
+k\,j(p^{n+1}_h, p^{n+1}_h)& \\
 \displaystyle
+\frac{1}{2}\frac{\|\widetilde\u^{n+1}_h-\u^\star_h\|^2
+\|\widetilde\u^{n+1}_h-\u^{\star\star}_h\|^2
+\|\widetilde\u^{n+1}_h-\u^{\star\star\star}_h\|^2}{3}& \\
 \displaystyle
+\frac{1}{2}\frac{\|\u^\star_h-\u^{n}_h\|^2+\|\u^{\star\star}_h-\u^{n}_h\|^2
+\|\u^{\star\star\star}_h-\u^{n}_h\|^2}{3}&\\
 \displaystyle
- \lambda\, k \, ((\nabla\dd^n_h)^T \w^{n+1}_h,\u^\star_h)
+ \lambda\, k \,\beta\, (\u^{\star\star}_h, \nabla\cdot (\w_h^{n+1} (\dd^n_h)^T))&\\
 \displaystyle
+ \lambda\, k \,(1+\beta)\,(\u^{\star\star\star}_h, \nabla\cdot
 (\dd^n_h (\w^{n+1}_h)^T )
&= 0.
\end{array}
\end{equation}
Finally, we add (\ref{aux4}) and  \eqref{aux1}; thus the terms  
$((\nabla\dd^n_h)^T\w^{n+1}_h,\u^\star_h)$, 
$ (\u^{\star\star}_h, \nabla\cdot (\w_h^{n+1} (\dd^n_h)^T))
$ and 
$(\u^{\star\star\star}_h, \nabla\cdot (\dd^n_h (\w^{n+1}_h)^T )$
cancel out, and hence \eqref{induction} holds. This finishes the proof.
\end{proof}

Now, it is not difficult to extend the previous local-in-time discrete energy estimate to a 
global-in-time one.
\begin{theorem}\label{th:stability}
Assume that  $\rm (H0)$-$\rm (H4)$ are satisfied. The discrete solution 
$\{(\u^n_h, \dd_h^n, \w^{n}_h)\}_{n=0}^N$ of scheme 
\eqref{scheme3eq1}-\eqref{scheme3eq2} satisfies  
\begin{equation}\label{global-estimate}
\max_{r\in\{0,\cdots, N-1\}}\left\{{\mathcal E}(\u^{r+1}_h,\dd^{r+1}_h)
 +k\sum_{n=0}^r \left(\nu
\|\nabla\u_h^{n+1}\|^2 +\lambda\gamma \|\w^{n+1}_h\|^2\right)\right\}
\le {\mathcal E}(\u_{0h},\dd_{0h}).
\end{equation}
\end{theorem}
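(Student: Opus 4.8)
The plan is to obtain the global estimate \eqref{global-estimate} as a direct consequence of the one-step inequality \eqref{induction} proved in Lemma \ref{le:induction}, by summing in $n$ and exploiting the telescoping structure of the energy differences. No genuinely new estimate is needed: the entire argument rests on discarding the nonnegative terms in \eqref{induction} and then summing, so I would present it as a short corollary-style deduction.

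First I would observe that every term on the left-hand side of \eqref{induction} other than the energy difference ${\mathcal E}(\u^{n+1}_h,\dd^{n+1}_h)-{\mathcal E}(\u^{n}_h,\dd^{n}_h)$ and the retained dissipation $k(\nu\|\nabla\u_h^{n+1}\|^2+\lambda\gamma\|\w^{n+1}_h\|^2)$ is nonnegative. Indeed, $\frac{\lambda}{2}\|\nabla(\dd^{n+1}_h-\dd^n_h)\|^2\ge 0$ because $\lambda>0$; the grouped terms built from the squared $\LL^2$-norms of differences among $\u_h^{n+1}$, $\utecho^{n+1}_h$, $\widetilde\u^{n+1}_h$, $\u^\star_h$, $\u^{\star\star}_h$, $\u^{\star\star\star}_h$ and $\u^n_h$ are manifestly nonnegative; and the stabilization contribution is nonnegative since
$$
k\,j(p^{n+1}_h, p^{n+1}_h)=k\,S\frac{1}{\nu}\|p^{n+1}_h-\pi_0( p^{n+1}_h)\|^2\ge 0,
$$
using $S>0$ and $\nu>0$. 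Dropping all these quantities reduces \eqref{induction} to
$$
{\mathcal E}(\u^{n+1}_h,\dd^{n+1}_h)-{\mathcal E}(\u^{n}_h,\dd^{n}_h)
+k\left(\nu\|\nabla\u_h^{n+1}\|^2+\lambda\gamma\|\w^{n+1}_h\|^2\right)\le 0,
$$
valid for every $n\in\{0,\dots,N-1\}$.

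Next I would fix an arbitrary $r\in\{0,\dots,N-1\}$ and sum this reduced inequality over $n=0,\dots,r$. The energy differences telescope, leaving
$$
{\mathcal E}(\u^{r+1}_h,\dd^{r+1}_h)-{\mathcal E}(\u^{0}_h,\dd^{0}_h)
+k\sum_{n=0}^r\left(\nu\|\nabla\u_h^{n+1}\|^2+\lambda\gamma\|\w^{n+1}_h\|^2\right)\le 0.
$$
Using the initializations $\dd^0_h=\dd_{0h}=\mathcal{I}_h\dd_0$ from \eqref{initial-d0} and $\u^0_h=\u_{0h}$, so that ${\mathcal E}(\u^0_h,\dd^0_h)={\mathcal E}(\u_{0h},\dd_{0h})$, this rearranges to
$$
{\mathcal E}(\u^{r+1}_h,\dd^{r+1}_h)
+k\sum_{n=0}^r\left(\nu\|\nabla\u_h^{n+1}\|^2+\lambda\gamma\|\w^{n+1}_h\|^2\right)\le {\mathcal E}(\u_{0h},\dd_{0h}).
$$
Since the right-hand side is independent of $r$ and the bound holds for each $r\in\{0,\dots,N-1\}$, I can take the maximum over $r$ on the left to conclude \eqref{global-estimate}.

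The argument presents essentially no obstacle, the real content having been concentrated in Lemma \ref{le:induction}; the only points deserving a line of justification are the sign of the stabilization term $j(\cdot,\cdot)$ and the passage of the finite maximum through the per-$r$ bound, both routine. I would also note in passing that ${\mathcal E}(\u_h,\dd_h)\ge 0$, each of its three summands being nonnegative thanks to $\widetilde F\ge 0$ and $\lambda>0$, which confirms that the left-hand side of \eqref{global-estimate} is a genuine norm-type control rather than a vacuous bound.
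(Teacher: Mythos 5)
Your proposal is correct and follows exactly the paper's route: the paper's proof of Theorem \ref{th:stability} is a one-line deduction from Lemma \ref{le:induction} by summing over $n$, and your argument simply makes explicit the (routine) nonnegativity of the discarded terms, the telescoping, and the passage to the maximum over $r$.
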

\begin{proof} The proof follows easily from Lemma \ref{le:induction} 
and by summing over $n$.  
\end{proof}

It remains to prove  that  the initial energy ${\mathcal E}(\u_{0h},\dd_{0h})$ 
is bounded independent of $(h,k,\varepsilon)$.
\begin{lemma}\label{le:initial-bound}
 Assume that hypotheses $\rm (H1)$-$\rm (H4)$ hold.  If $(h,\varepsilon)$ 
 are chosen satisfying 
\begin{equation}\label{const-h-eps}
\frac{h}{\varepsilon}\le K,
\end{equation} for some constant $K>0$, then 
\begin{equation}\label{initial-bound}
{\mathcal E}(\u_{0h}, \dd_{0h})\leq C_0
\end{equation}
for the initial approximations $(\u_{0h}, \dd_{0h})$ defined in 
\eqref{initial-d0} and \eqref{initial-u0}.
\end{lemma}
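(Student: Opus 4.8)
The plan is to estimate the three contributions to ${\mathcal E}(\u_{0h},\dd_{0h})$ separately — the kinetic term $\frac{1}{2}\|\u_{0h}\|^2$, the Dirichlet term $\frac{\lambda}{2}\|\nabla\dd_{0h}\|^2$, and the penalty term $\lambda\int_\Omega \widetilde F(\varepsilon,\dd_{0h})$ — showing that each is controlled by the data $(\u_0,\dd_0)$ alone. The first two are immediate and see neither $\varepsilon$ nor $k$. For the kinetic part I would invoke the stability bound $\|\u_{0h}\|\le C\|\u_0\|$ already recorded after \eqref{initial-u0}, so that $\frac{1}{2}\|\u_{0h}\|^2\le \frac{C^2}{2}\|\u_0\|^2$, which is finite since $\u_0\in\HH\subset\LL^2(\Omega)$. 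For the Dirichlet part I would use the $\HH^1$-stability \eqref{stabH1} of the interpolation operator applied to $\dd_{0h}=\mathcal I_h\dd_0$, giving $\|\nabla\dd_{0h}\|\le\|\mathcal I_h\dd_0\|_{\HH^1(\Omega)}\le C_{sta}\|\dd_0\|_{\HH^1(\Omega)}<\infty$ because $\dd_0\in\HH^1(\Omega)$.

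The heart of the matter is the penalty term, and this is where the scaling hypothesis \eqref{const-h-eps} enters. First I would record the pointwise bound $\widetilde F(\varepsilon,\dd)\le \frac{1}{4\varepsilon^2}(|\dd|^2-1)^2$, valid in both branches of \eqref{Truncated-Pontential-fun} (for $|\dd|>1$ one writes $(|\dd|-1)^2=(|\dd|^2-1)^2/(|\dd|+1)^2\le \frac{1}{4}(|\dd|^2-1)^2$), whence
\[
\int_\Omega \widetilde F(\varepsilon,\dd_{0h})\le \frac{1}{4\varepsilon^2}\int_\Omega\big(|\dd_{0h}|^2-1\big)^2 .
\]
To produce the compensating factor $h^2$ I would exploit that $\dd_{0h}=\mathcal I_h\dd_0$ is close to $\dd_0$: writing $|\dd_{0h}|^2-1=|\dd_{0h}|^2-|\dd_0|^2=(\dd_{0h}-\dd_0)\cdot(\dd_{0h}+\dd_0)$ — here using crucially that $|\dd_0|=1$ a.e. so that $|\dd_0|^2-1$ vanishes — then bounding the sum factor in $\LL^\infty$ by the stability \eqref{stabLinf}, $\|\dd_{0h}+\dd_0\|_{\LL^\infty(\Omega)}\le (C_{sta}+1)\|\dd_0\|_{\LL^\infty(\Omega)}=C_{sta}+1$, while the difference factor is controlled by the interpolation error \eqref{interp_error_Ih}, $\|\dd_{0h}-\dd_0\|\le C_{app}\,h\,\|\nabla\dd_0\|$. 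Combining these gives $\int_\Omega(|\dd_{0h}|^2-1)^2\le (C_{sta}+1)^2 C_{app}^2\,h^2\,\|\nabla\dd_0\|^2$, so that $\lambda\int_\Omega\widetilde F(\varepsilon,\dd_{0h})\le C\,(h/\varepsilon)^2\|\nabla\dd_0\|^2\le C\,K^2\|\nabla\dd_0\|^2$ after invoking \eqref{const-h-eps}. Adding the three estimates then yields \eqref{initial-bound} with $C_0$ depending only on $\u_0$, $\dd_0$, $\lambda$, $K$ and the interpolation constants; the independence from $k$ is automatic since neither $\u_{0h}$ nor $\dd_{0h}$ involves the time step.

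The main obstacle is precisely the cancellation in the penalty term. The gain of the factor $h^2$ from the interpolation error survives division by $\varepsilon^2$ only because of \eqref{const-h-eps}, and $(|\dd_{0h}|^2-1)^2$ can be made $O(h^2)$ only if $|\dd_0|^2-1$ itself vanishes, i.e. only if the initial director has unit length a.e.; with merely $|\dd_0|\le 1$ on a set of positive measure the quantity $\int_\Omega(|\dd_0|^2-1)^2$ is an $O(1)$ constant and the $\varepsilon^{-2}$ prefactor would destroy the $\varepsilon$-uniform bound. I would therefore carry out the argument under $|\dd_0|=1$ a.e., the physically relevant reading of $\rm (H4)$; all remaining steps are routine applications of the interpolation estimates of the Proposition together with the stability of $\u_{0h}$.
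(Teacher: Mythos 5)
Your proof is correct and follows essentially the same route as the paper's: the kinetic and gradient terms are handled by the stability of $\u_{0h}$ and by \eqref{stabH1}, and the penalty term by the factorization $|\dd_{0h}|^2-|\dd_0|^2=(\dd_{0h}-\dd_0)\cdot(\dd_{0h}+\dd_0)$ combined with \eqref{stabLinf}, \eqref{interp_error_Ih} and the scaling condition \eqref{const-h-eps}. Your remark that the cancellation requires $|\dd_0|=1$ a.e.\ is well founded: the paper's own first inequality in \eqref{aux10}, namely $\frac14(|\dd_{0h}|^2-1)^2\le(|\dd_{0h}|^2-|\dd_0|^2)^2$, likewise holds only under that reading, whereas $\rm(H4)$ as written states merely $|\dd_0|\le 1$.
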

\begin{proof} We take $\bar\u=\u_{0h}$ and $\bar p_h=p_{0h}$ as 
test functions into \eqref{initial-u0} to obtain
\begin{equation}\label{aux8}
\frac{1}{2}\|\u_{0h}\|^2+j(p_{0h}, p_{0h})\le \frac{1}{2}\|\u_0\|^2.
\end{equation}

Moreover, from \eqref{stabH1}, we have
\begin{equation}\label{aux9}
\|\dd_{0h}\|_{\HH^1(\Omega)}\le C \|\dd_0\|_{\HH^1(\Omega)}.
\end{equation}
Now, we bound as in \cite{Guillen2013}, 
\begin{eqnarray}
\displaystyle
 \int_{\Omega} F(\varepsilon,\dd_{0h})&\le&
\displaystyle \frac{1}{\varepsilon^2} \int_{\Omega} (|\dd_{0h}|^2-|\dd_0|^2)^2
=\frac{1}{\varepsilon^2}\int_{\Omega}(|\dd_{0h}+\dd_0||\dd_{0h}-\dd_{0}|)^2
\nonumber\\
&\le & \displaystyle \|\dd_{0h}
+\dd_0\|^2_{\LL^\infty(\Omega)} \|\dd_{0h}-\dd_0\|^2\le 
C \, \frac{h^2}{\varepsilon^2}\|\dd_0\|^2_{\HH^1(\Omega)}, \label{aux10}
\end{eqnarray}
where \eqref{interp_error_Ih} and \eqref{stabLinf}   has been applied. 
Combining (\ref{aux10}) with (\ref{aux8}) and (\ref{aux9}) together with 
(\ref{const-h-eps}), we obtain \eqref{initial-bound}.
\end{proof}

\begin{remark}  Hypothesis (\ref{const-h-eps}) is rarely explicitly 
mentioned in numerical papers based on algorithms using the penalty 
approach but it is required to guarantee a priori energy estimates 
independent of $\varepsilon$. It seems that this condition is overlooked.
 Nevertheless, it is important to underline that the constraint 
 (\ref{const-h-eps}) for $(h,\varepsilon)$ comes only from the 
 approximation of $\dd_0$ in $\DD_h$ but not from the discrete scheme itself.
\end{remark}
\section{Numerical results}\label{Numerical_results}
%
%
%
%
%

From now on, we consider the particular instance of the approximating spaces $\DD_h, \VV_h$  and $P_h$ 
described in $\rm (H3)$. The main objective of this section is to illustrate the stability, efficiency and reliability of scheme \eqref{scheme3eq1}-\eqref{scheme3eq2}. In doing so, we will present some  numerical experiments concerning the annihilation of singularities, and  use the results to check numerically how the stabilization constant $H_F$  must be chosen to assure  the unconditional stability. The numerical  solutions are calculated with a computer program implemented in FreeFem++ \cite{Hecht2012}. 

Before going further, we discuss some implementation issues concerning scheme
\eqref{scheme3eq1}-\eqref{scheme3eq2}.  

\subsection{Implementation issues}\label{sect:Implementation}
%

Let $N_d=\dim(\DD_h)$,  $N_w=\dim(\WW_h)$ and let $\{\Bphi^d_i\}_{i=1}^{N_d}$ and 
$\{\Bphi^w_i\}_{i=1}^{N_w}$ be the  finite-element bases for $\DD_h$ and 
$\WW_h$,  respectively, 
constructed from  the local basis 
 of $X_h$ and $Y_h$, respectively. 
  We describe the following matrices related to $\WW_h$:
\begin{eqnarray*}
&\Mmat_{w,d}=
\left(\int_\Omega \Bphi_i^d\cdot\Bphi_j^w\right)_{ij},\quad
\Mmat_{w}=
\left(\int_\Omega \Bphi_i^w\cdot\Bphi_j^w\right)_{ij},\quad
\displaystyle
\Bmat_{w}=
\Bmat^{\star}_{w}+
\Bmat^{\star\star}_{w}+\Bmat^{\star\star\star}_{w},&
\end{eqnarray*}
where
\begin{eqnarray*}
&\displaystyle\Bmat^{\star}_{w}=
3\lambda k\left( \int_\Omega [ \nabla\dd_h^n]^T\Bphi_i^w\cdot 
[\nabla\dd^n_h]^T \Bphi_j^w\right)_{ij},\quad
\Bmat^{\star\star}_{w}=
3\lambda\beta^2k\left( \int_\Omega\left[\nabla\cdot(\Bphi_i^w[\dd_h^n]^T)\right]
\cdot 
\left[\nabla\cdot ( \Bphi_j^w [\dd^n_h]^T)\right]\right)_{ij},&\\
&\displaystyle\Bmat^{\star\star\star}_{w}=
3\lambda(1+\beta)^2k\left(
\int_\Omega
\left[\nabla\cdot(\dd_h^n[\Bphi_i^w]^T)\right]\cdot 
\left[\nabla\cdot (\dd^n_h [\Bphi_j^w]^T)\right] 
\right)_{ij}.&
\end{eqnarray*}
Also,   we introduce the matrices related to $\DD_h$:
$$
\Mmat_{d}=
\left(\int_\Omega \Bphi_i^d\cdot\Bphi_j^d\right)_{ij},
\quad
\Mmat_{d,w}=
\left(\int_\Omega \Bphi_i^w\cdot\Bphi_j^d\right)_{ij},\quad
\Lmat_d=
\left(\int_\Omega \nabla\Bphi_i^d\cdot\nabla\Bphi_j^d\right)_{ij}.
$$
Moreover, let us denote by $\Wvec \in \mathbb{R}^{N_w}$ and 
$ \Dvec \in \mathbb{R}^{N_d}$
the coordinate vectors, with respect to the fixed bases of the 
finite-element functions $\w_h \in \WW_h$ and  $\dd_h \in \DD_h$, 
respectively. Thus, we can rewrite system 
\eqref{scheme3eq1} as 
\begin{subequations}\label{matrix_version_eq1}
\begin{empheq}[left=\empheqlbrace]{align}
\displaystyle \frac{1}{k}\Mmat_{w,d}\Dvec^{n+1}+ 
\left(\Bmat_{w} +\gamma\Mmat_w\right)\Wvec^{n+1}&=
\frac{1}{k}\Mmat_{w,d}\Dvec^{n} -\Fvec_{w},\label{matrix_version_eq1a}\\
\Lmat_d\Dvec^{n+1}+\frac{H_F}{2\varepsilon^2}\Mmat_{d}\Dvec^{n+1}
-\Mmat_{d,w}\Wvec^{n+1}&=\Fvec,
\label{matrix_version_eq1b}
\end{empheq}
\end{subequations}
where $\displaystyle\Fvec_{w}\in \R^{N_d}$ and $\Fvec\in \R^{N_w}$ 
are defined, respectively, as
\begin{eqnarray*}
&\displaystyle\Fvec_{w}=
\left(\int_\Omega \u_h^n\cdot\left[[\nabla\dd^n_h]^T \Bphi_j^w\right]
-\beta\int_\Omega \u_h^n\cdot  \left[\nabla\cdot (\Bphi_j^w (\dd^n_h)^T)\right]
-(1+\beta)\int_\Omega \u_h^n\cdot \left[\nabla\cdot 
(\dd^n_h (\Bphi_j^w )^T \right] \right)_j,
&\\
&\displaystyle\mbox{and }\Fvec=
\left(\int_\Omega \left[\frac{H_F}{2\varepsilon^2}\dd^n_h-
\tilde\f_\varepsilon(\dd^n_h)\right]\cdot\phi_j^d\right)_j. 
&
\end{eqnarray*} 
By defining $\Emat_w=\Bmat_w+\gamma\Mmat_w$, from  
\eqref{matrix_version_eq1a}, we have
$$
\displaystyle \Wvec^{n+1}=\Emat^{-1}_w \left[
\frac{1}{k}\Mmat_{w,d}\left(\Dvec^{n}-\Dvec^{n+1}\right)
-\Fvec_{w}\right],
$$
where $\Emat_w$ can be seen in two different ways depending on the reordering of
the degrees of freedom of $\Wvec^{n+1}$: (1) A block-diagonal, $M$-by-$M$ 
matrix, which 
is easy to invert by using a block Gauss-Jordan elimination method, or (2) 
an $M$-by-$M$ block, diagonal 
matrix, since the degrees of freedom of two different elements are not coupled, 
which 
is also easily invertible by using block computations. The first approach 
is much more adequate especially 
for legacy code bases, which is our case here. 

Replacing the above equality in equations  \eqref{matrix_version_eq1b}, and after 
some calculations, the resulting system is:
\begin{equation}\label{linear-system}
\left(\Lmat_d+\frac{1}{k}\Mmat_{d,w}\Emat_w^{-1}\Mmat_{w,d}
+\frac{H_F}{2\varepsilon^2}\Mmat_{d}\right)\Dvec^{n+1}
=\Mmat_{d,w}\Emat_w^{-1}
\left[\frac{1}{k}\Mmat_{w,d}\Dvec^{n}-\Fvec_{w}\right]+\Fvec.
\end{equation}
Consequently, we can avoid, at algebraic level, computing the 
auxiliary vector $\Wvec^{n+1}$ by solving directly  
$\eqref{linear-system}$.
Observe that the matrix $\displaystyle\Lmat_d+
\frac{1}{k}\Mmat_{d,w}\Emat_w^{-1}\Mmat_{w,d}
+\frac{H_F}{2\varepsilon^2}\Mmat_{d}$ is the Schur complement of system 
\eqref{matrix_version_eq1} with respect to  $\Emat_w$. Moreover, 
such a matrix is symmetric and positive definite. 


\subsection{Annihilation of singularities} In this experiment, we consider 
$\Omega=(-1,1)\times(-1,1)$ and the physical parameters  $\nu=\lambda=\gamma=1$. 
Also, we take  $\beta=-1$, that is, rod-like molecules of the liquid 
crystal are considered.
The discretization, penalization and stabilization parameters are set as
\[
k=0.001, h=0.0790796, \varepsilon=0.05, \mbox{ and } H_F=0.
\]

Given an initial  velocity $\vv_0$, the main objective of the experiment 
is to study the evolution of the singularities of the initial director 
$\dd_0$, that is, 
points  of the computational domain  $\Omega$ where $|\dd_0|=0$. 
In particular, we will 
present two numerical experiences concerning the annihilation of 
two and four singularities, respectively.
Furthermore, we will show the behavior of the energies, the singularities 
and the velocity fields for each of these simulations.  
Let us begin with the case of two singularities.

\subsubsection{Two singularities} This experiment was  originally proposed 
for a nematic liquid crystal without stretching in \cite{Liu2000} and, later extended for system \eqref{LC} in \cite{Lin2007}. In these works, Dirichlet boundary conditions for the director field were considered. In our case, 
we consider Neumann boundary conditions as in the results presented in \cite{Becker2008} and \cite{Cabrales2015}. The initial conditions of the problem are
 \[
\u_0=\boldsymbol{0},\quad
\dd_0=\frac{\tilde\dd}{\sqrt{|\tilde\dd|^2+\varepsilon^2}},
\mbox{ where } \tilde\dd=(x^2+y^2-0.25,y).
\]
In Figure \ref{dinamica2sing}, we present snapshots of the director and 
velocity fields displayed at  times $t=0.0, 0.1, 0.2,0.3$. Initially, 
the two singularities are transported to the origin by the velocity field, 
which, at the beginning of the experiment, forms four vortices being 
transformed only into two at the end. 
The numerical results show that the behavior of the director field is 
analogous qualitatively to the corresponding director field for the model 
without stretching reported in \cite{Becker2008} and \cite{Cabrales2015}. 
On the other hand, quantitatively, we observe that the size of the time interval, where the dynamics takes place, is very different. In our case, the size of this interval is about three times smaller than in the case without stretching, causing the annihilation time to be smaller than that in \cite{Cabrales2015}. Concerning the behavior of the velocity field, we find that both the magnitude and the dynamics of the velocity  are quite different. The maximum values of the kinetic energy in  \cite{Cabrales2015} and now differ about two times each other, being bigger now with stretching. We start the computation with the formation of four vortices, and end up with two symmetrical vortices with respect to $y=0$, whilst, in \cite{Becker2008} and \cite{Cabrales2015}, the four symmetrical vortices with respect to the origin keep so till the dynamics vanishes.

In Figure \ref{EnCinetica2Sing}, we present the evolution of kinetic, elastic, 
and  penalization energies, as well as the total energy. As predicted by 
inequality \eqref{induction}, the total  energy decreases with time. 
Also, the kinetic energy reaches its maximum level at  the annihilation 
time that in this case is at $t\thickapprox 0.242$. After this time, 
the system evolves to a steady state solution and all the energies 
decay quickly. Qualitatively, the graphics of the energies are similar 
to those reported in \cite{Becker2008} and \cite{Cabrales2015} 
for the model without stretching.
\begin{figure}[h]
\centering
\subfigure[$\|\dd\|_\infty=0.9977852.$]{
\includegraphics[scale=0.2]{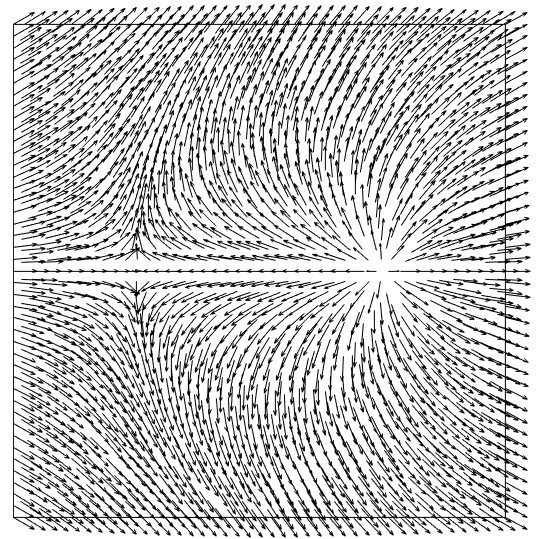}} 
\subfigure[$\|\dd\|_\infty=0.9971739.$]{
\includegraphics[scale=0.2]{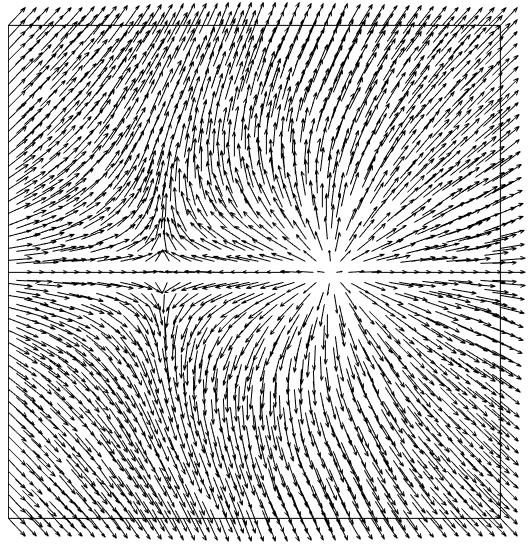}} 
\subfigure[$\|\dd\|_\infty=0.9973484.$]{
\includegraphics[scale=0.2]{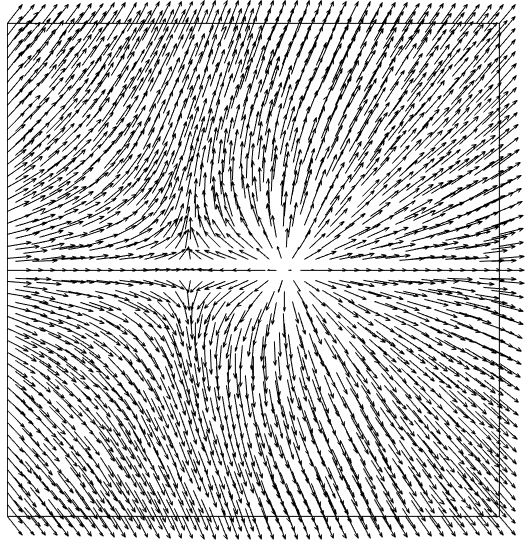}} 
\subfigure[$\|\dd\|_\infty=0.9974461.$]{
\includegraphics[scale=0.2]{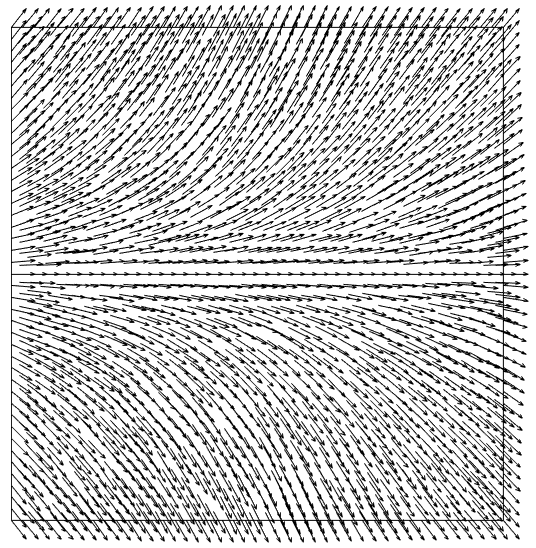}} 
\subfigure[$\|\u\|_\infty=0.0.$]{
\includegraphics[scale=0.21]{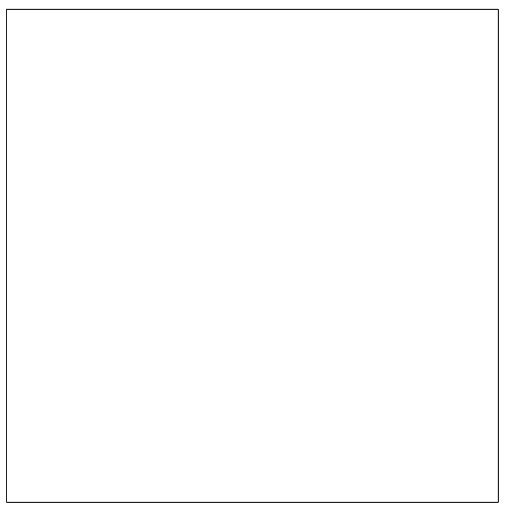}} 
\subfigure[$\|\u\|_\infty=0.289702.$]{
\includegraphics[scale=0.21]{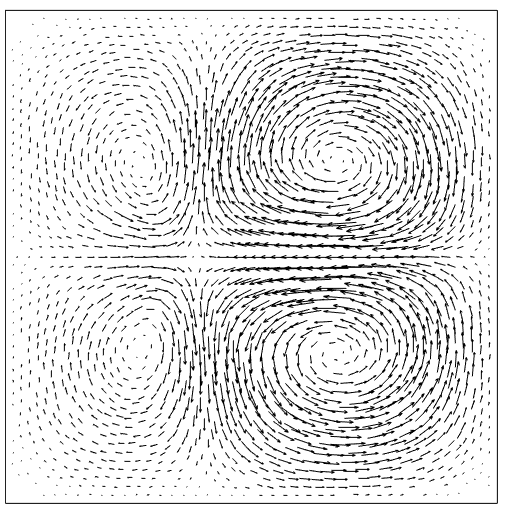}} 
\subfigure[$\|\u\|_\infty=0.4079601.$]{
\includegraphics[scale=0.21]{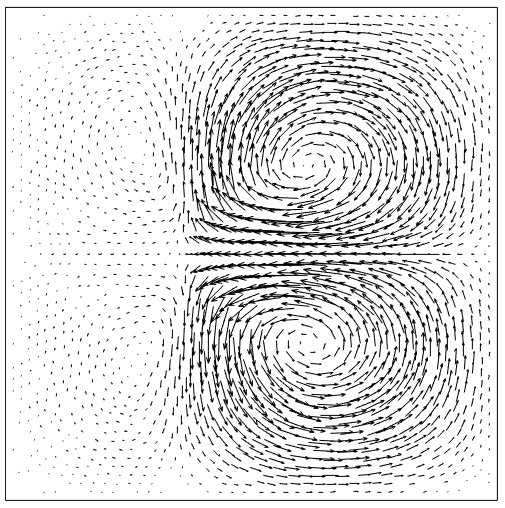}} 
\subfigure[$\|\u\|_\infty=0.2954702.$]{
\includegraphics[scale=0.21]{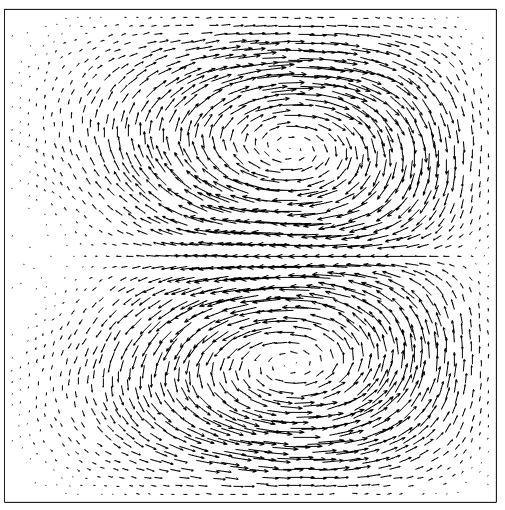}} 
\caption{Evolution of the director field (a)-(d) and the velocity field (e)-(h)
for the annihilation of two singularities
at times $t=0.0, 0.1, 0.2,0.3$. Here $\beta=-1$ and $H_F=0$.}
\label{dinamica2sing}
\end{figure}
\begin{figure}[h]
\centering
\subfigure[]{
\includegraphics[scale=0.36]{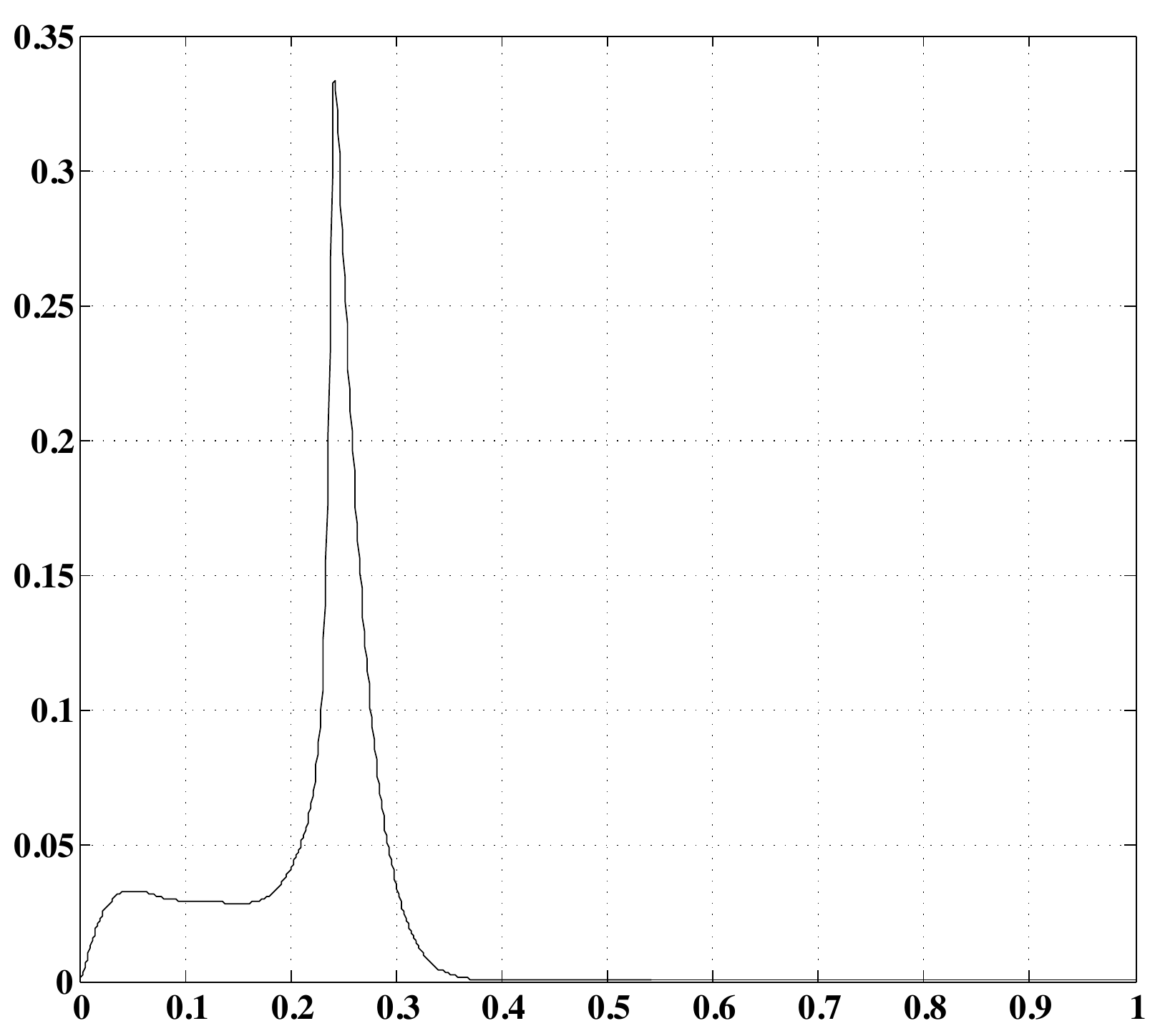}}\hspace{1cm}
\subfigure[]{
\includegraphics[scale=0.52]{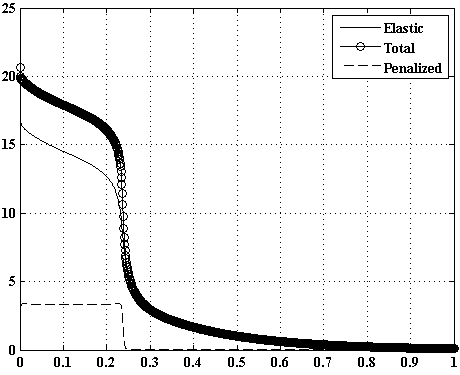}}
\caption{Evolution in time of the energies for the experiment of two singularities.
Kinetic energy (left) and total, elastic, and penalization energies (right). 
Here $\beta=-1$ and $M=0$.}\label{EnCinetica2Sing}
\end{figure}

\subsubsection{Four singularities}

In this case, the initial director field has two singularities on the 
$x-$axis located at the points $(\pm 0.5,0)$ and two more singularities on 
the $y-$axis, located at the points  $(0,\pm0.25)$. More precisely,  
we consider the initial conditions
 \[
\u_0=\boldsymbol{0},\quad
\dd_0=\frac{\tilde\dd}{\sqrt{|\tilde\dd|^2+\varepsilon^2}},\mbox{ where } 
\tilde\dd=\left(\frac{x^2}{0.5^2}+\frac{y^2}{0.25^2}-1 ,- x y \right).
\]
In Figure \ref{dinamica4sing}, we present snapshots of the director 
and velocity fields displayed at times $t=0.02, 0.06, 0.08$ and $0.12$.
 We observe that the dynamics of the four singularities is faster than in 
 the two-singularity case. The two singularities located on the $x-$axis 
 begin to move toward each other, while those located on the $y-$axis 
 remains without moving till all the singularities are positioned at 
 the same distance from the origin. Then they move uniformly to the 
 origin and simultaneously disappear at the origin. This behavior is due 
 to the dynamics of the velocity field in the $y-$axis, preventing the 
 singularities located there from moving to the origin. 

As in the case of two singularities, we note that the annihilation time 
is smaller than in\cite{Cabrales2015}. Now, this time is at $t\thickapprox 0.071$ 
which is the half of the time reported in \cite{Cabrales2015}. 
This behavior is due to the increase of the magnitude of the velocity field.
 Additionally, at the beginning of the experiment, we note the formation 
 of four big vortices in each quadrant of the $xy$ plane and other 
 four more close to the $y-$axis. This last four vortices disappear 
 as time moves on. When the singularities annihilate, we note only 
 three vortices in the velocity field. 

\begin{figure}[H]
\centering
\subfigure[$\|\dd\|_\infty=0.9999915.$]{
\includegraphics[scale=0.18]{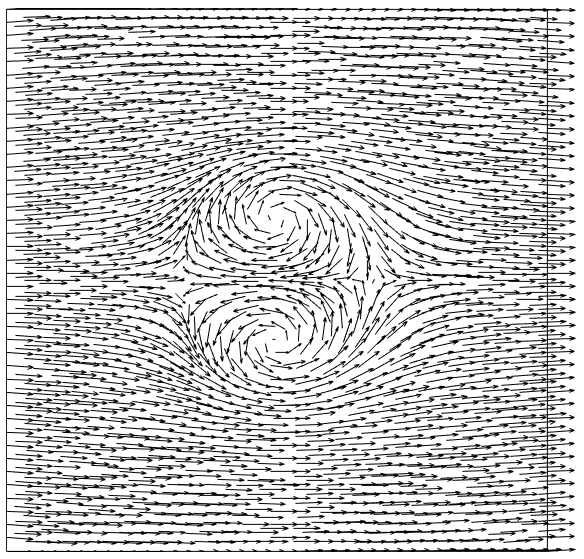}} 
\subfigure[$\|\dd\|_\infty=0.9999619$]{
\includegraphics[scale=0.18]{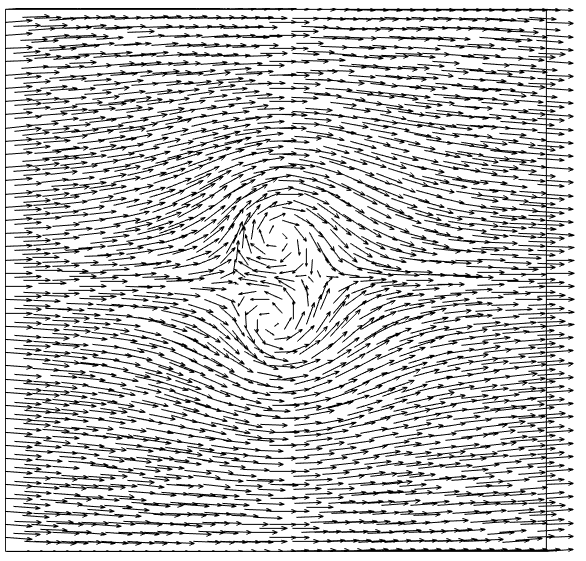}} 
\subfigure[$\|\dd\|_\infty=0.9999404.$]{
\includegraphics[scale=0.18]{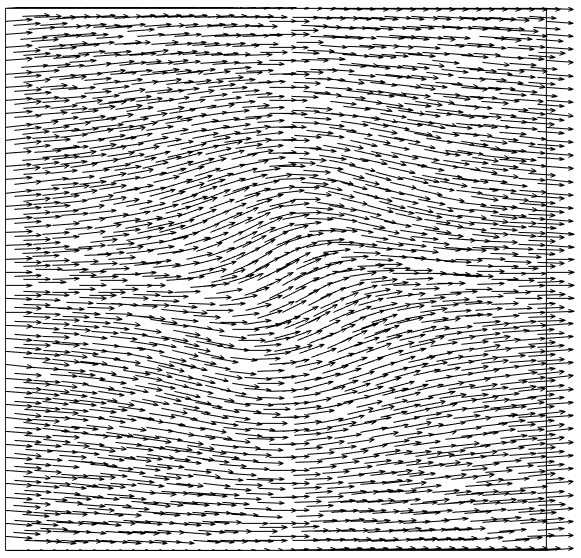}} 
\subfigure[$\|\dd\|_\infty=0.9999247.$]{
\includegraphics[scale=0.18]{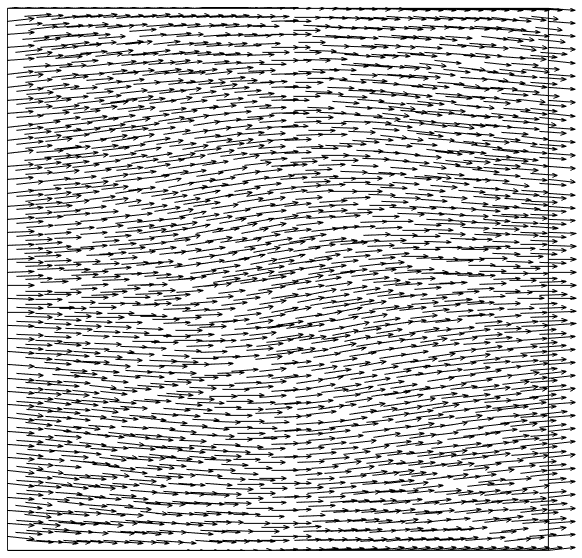}}
\subfigure[$\|\u\|_\infty=1.21298.$]{
\includegraphics[scale=0.18]{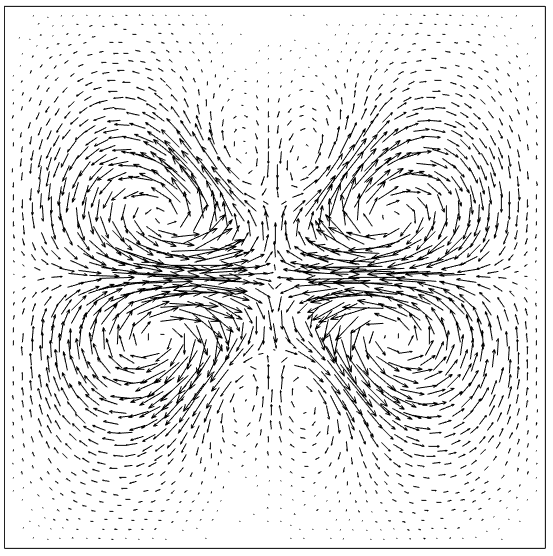}}
\hspace{0.1cm}
\subfigure[$\|\u\|_\infty=1.765187.$]{
\includegraphics[scale=0.18]{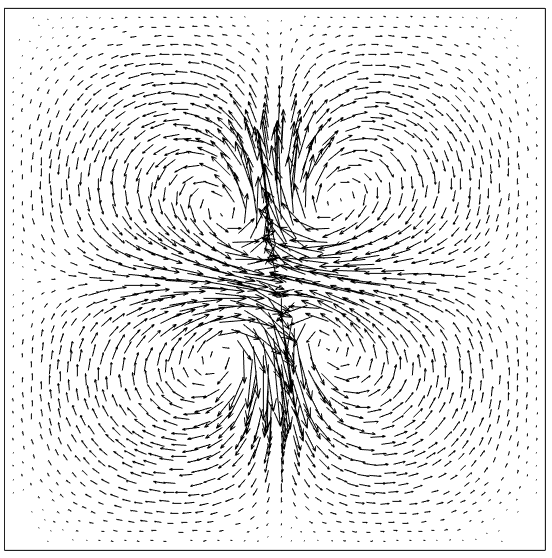}} 
\hspace{0.1cm}
\subfigure[$\|\u\|_\infty=1.299686.$]{
\includegraphics[scale=0.18]{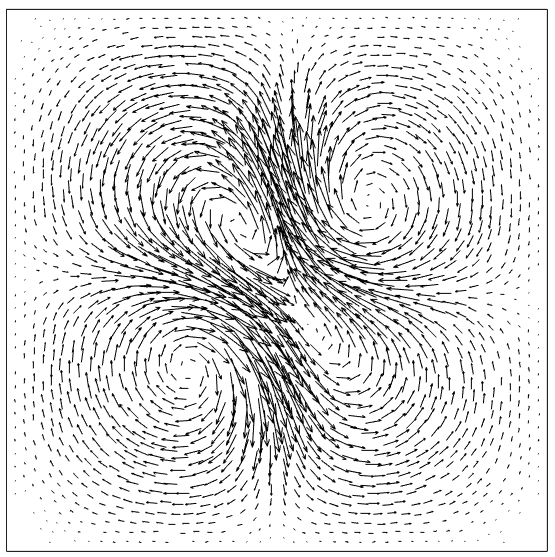}} 
\hspace{0.1cm}
\subfigure[$\|\u\|_\infty=0.2250148.$]{
\includegraphics[scale=0.18]{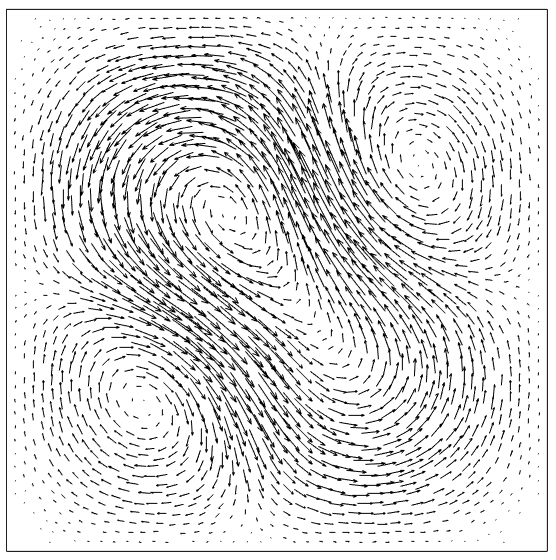}} 
\caption{Evolution of the director field (a)-(d) and the velocity field (e)-(h)
for the annihilation of four singularities at times 
$t=0.02, 0.06, 0.08,0.12$. Here $\beta=-1$ and $H_F=0$.}
\label{dinamica4sing}
\end{figure}

The graphs of the energies are shown in Figure \ref{EnCinetica4Sing}. In particular, we highlight that the maximum value of the kinetic energy is attained at about the annihilation time. On the other hand, as described in \cite{Cabrales2015}, such a maximum is reached at the very beginning of the simulation far away from the annihilation. The evolution of  the other energies behaves similarly. The total and elastic energies decrease with time, whilst the penalized one presents an increase at the beginning to adopt a constant behavior and, finally, 
decreases with time after the annihilation of the singularities.
\begin{figure}[H]
\centering
\subfigure[]{
\includegraphics[scale=0.33]{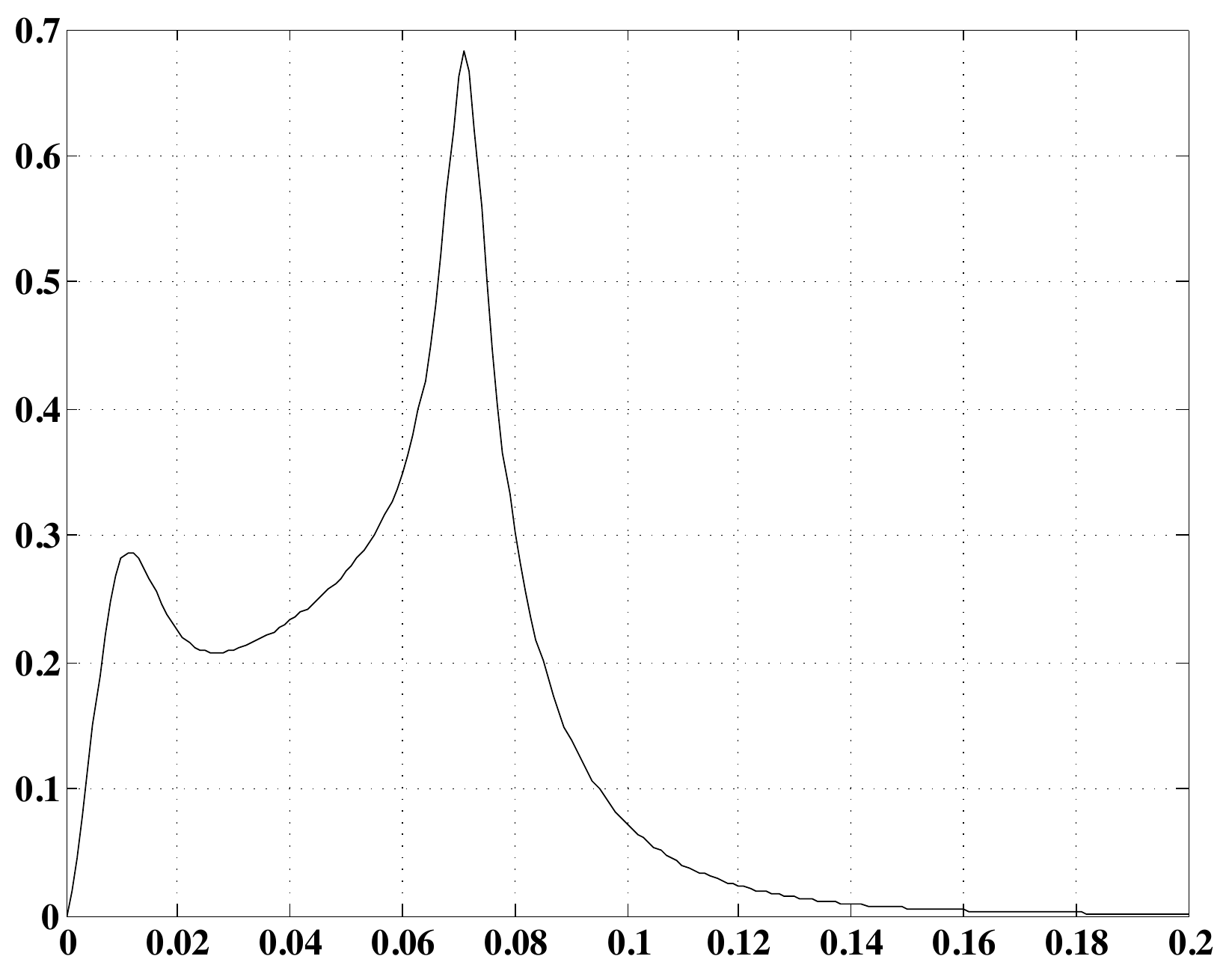}}\hspace{0.5cm}
\subfigure[]{
\includegraphics[scale=0.43]{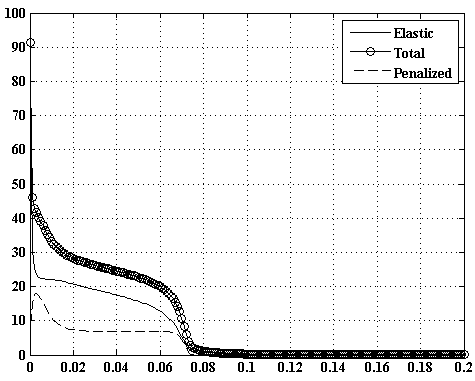}}
\caption{Evolution in time of the energies for the experiment of four singularities.
Kinetic energy (left) and total, elastic, and penalization energies (right).}
\label{EnCinetica4Sing}
\end{figure}

\subsection{Stability dependence on $H_F$}

Next, we carry out a sensitivity study of scheme 
\eqref{scheme3eq1}-\eqref{scheme3eq2}. More precisely, we give a detailed 
look to the relation among the stabilization constant $H_F$ defined in \eqref{constanteHf}, 
the geometrical parameter $\beta$ and the penalization parameter $\varepsilon$. 
The results were generated by considering the problems of annihilation of singularities described previously; see Tables  \ref{tabla1}, \ref{tabla2}, \ref{tabla1four} and \ref{tabla2four}.
\begin{table}[H]
\centering
\caption{Stability dependence of scheme \eqref{scheme3eq1}-\eqref{scheme3eq2} 
on the parameters $M$ and $\beta$ for the annihilation of two
singularities.
In this case $\varepsilon=0.05, k=0.001$ and $h=0.0913931$.
$T_{\max}$ is the time where the kinetic energy reaches its maximum value.}
\begin{tabular}{r|lllll|l}\hline
\textbf{$\beta \backslash M$} & 0 & 0.5  & 1 & 1.5 & 2 &\\\hline
           & \ding{51} & \ding{51}    & \ding{51}  & \ding{51}  &\ding{51}  & Stab.\\ 
0         & 0.293 &  0.412  & 0.486 & 0.542  & 0.628  & $T_A$\\ 
& 3.12172$\times 10^{-34}$&  1.084$\times 10^{-34}$  & 2.5296$\times 10^{-34}$ &  
1.1265$\times 10^{-34}$ & 9.61286$\times 10^{-34}$ & $E_{kin}$\\ \hline
    & \ding{51} & \ding{51}    & \ding{51}  & \ding{51}  &\ding{51}  & Stab.\\ 
$-$0.2 & 0.302    & 0.426 &  0.501 &  0.571  &  0.64  &   $T_A$\\ 
 &0.0321312& 0.0214626 & 0.0176872 &  0.0150644  &  0.0131091 & $E_{kin}$\\ \hline
    & \ding{51} & \ding{51}    & \ding{51}  & \ding{51}  &\ding{51}  & Stab.\\ 
$-$0.5  & 0.285 & 0.41 & 0.485 & 0.555& 0.624 & $T_A$\\ 
  &  0.156098 &   0.0991694 & 0.0802572 & 0.0674879 & 0.0581182&  $E_{kin}$\\ \hline
 & \ding{51} & \ding{51}    & \ding{51}  & \ding{51}  &\ding{51}  & Stab.\\ 
$-$0.8 & 0.259& 0.385  & 0.461 & 0.532 &  0.601& $T_A$\\ 
       &0.27635 & 0.16469  &  0.130359 & 0.108019 & 0.0918922 &$E_{kin}$\\ \hline
  & \ding{51} & \ding{51}    & \ding{51}  & \ding{51}  &\ding{51}  & Stab.\\ 
$-$1.0 & 0.242 &   0.369 & 0.445 &   0.517 & 0.586 &$T_A$\\ 
   &0.332162 &  0.189956&0.148277& 0.121708 &  0.102846&$E_{kin}$\\ \hline
\end{tabular}\label{tabla1}
\end{table}

\subsubsection{Study of $M$ vs. $\beta$}
We are concerned with the dependence of the stability on the parameters $H_F$ and $\beta$. The results are presented in Tables  \ref{tabla1} and \ref{tabla1four}. The former corresponds to the two-singularity case and the latter corresponds to the four-singularity case. We take $(\varepsilon, k,h)=(0.05, 0.001,0.0913931)$ and vary  
\[
(\beta,M)\in\{0, -0.2, -0.5, -0.8, -1\}\times \{0, 0.5, 1, 1.5, 2\}.
\] 
For each pair $(\beta,M)$ and both cases of singularities, Tables
\ref{tabla1} and \ref{tabla2} say us that the algorithm is stable and, 
that the geometrical parameter $\beta$ has a little or no influence in the 
stability of scheme \eqref{scheme3eq1}-\eqref{scheme3eq2}. An analysis 
similar to that in \cite{Guillen2013} leads us to think that the selected 
values for the parameters $(\varepsilon, k, h)$ are such that they must 
satisfy a certain relation among them. Thus scheme 
\eqref{scheme3eq1}-\eqref{scheme3eq2} is stable for $H_F=0$ and, 
consequently, for $H_F>0$. 
In Table \ref{tabla1}  
we also provide the maximum value of the kinetic energy $E_{kin}$ and 
the corresponding time $T_A$ (called annihilation time) at which is reached. 
If we fix $\beta$ and move the values of $H_F$ (taking $M$ from 0 to 2), 
we observe that the value of the kinetic energy decreases whilst the value 
of the annihilation time $T_A$ increases. On the contrary, if we fix $H_F$ 
and move the values of $\beta$ from -0.2 to -1, we observe that the value 
of the kinetic energy increases whilst the value of the annihilation 
time $T_A$ decreases. The only exceptional case that does not follow 
this pattern is for $\beta=0$, where the kinetic energy is almost zero and, 
however, we have annihilation for the cases of two and four singularities.
\begin{table}[H]
\centering
\caption{Stability dependence of scheme \eqref{scheme3eq1}-\eqref{scheme3eq2} on the
parameters $M$ and $\beta$ for the annihilation of four
singularities.
In this case $\varepsilon=0.05, k=0.001$ and $h=0.0913931$.
$T_{\max}$ is the time where the kinetic energy reaches its maximum value.}
\begin{tabular}{r|lllll|l}\hline
$\beta \backslash M$ & 0 & 0.5 &    1.0        &  1.5         & 2.0\\\hline
 & \ding{51} & \ding{51}   & \ding{51}  & \ding{51}  &\ding{51}  & Stab.\\ 
0 & 0.065 & 0.097 & 0.115  & 0.132 & 0.148 &   $T_A$\\ 
 & 1.74106$\times 10^{-34}$  & 5.80466$\times 10^{-34}$ & 3.52701$\times 10^{-34}$  &
   9.66597$\times 10^{-34}$ & 5.92986$\times 10^{-34}$ &   $E_{kin}$\\ \hline
 & \ding{51} & \ding{51}   & \ding{51}  & \ding{51}  &\ding{51}  & Stab.\\ 
$-$0.2 & 0.07 & 0.099&  0.117 & 0.134 & 0.151 &  $T_A$\\ 
 & 0.0209245 & 0.0119331 & 0.00905435 & 0.00720009 & 0.00588037 &  $E_{kin}$\\ \hline
 & \ding{51} & \ding{51}   & \ding{51}  & \ding{51}  &\ding{51}  & Stab.\\ 
$-$0.5 & 0.073 & 0.102 & 0.12 & 0.137 & 0.153 &  $T_A$\\ 
 & 0.140803 & 0.0811353 & 0.061811 & 0.0493141 & 0.0404074 &  $E_{kin}$\\ \hline
 & \ding{51} & \ding{51}   & \ding{51}  & \ding{51}  &\ding{51}  & Stab.\\ 
$-$0.8& 0.073 & 0.103 & 0.12 & 0.137 & 0.153 &  $T_A$\\ 
     & 0.357074 & 0.204117 & 0.155465 & 0.124123 & 0.101823 &  $E_{kin}$\\ \hline
 & \ding{51} & \ding{51}   & \ding{51}  & \ding{51}  &\ding{51}  & Stab.\\ 
$-$1.0 & 0.071 & 0.101 & 0.119 & 0.135 & 0.152 &  $T_A$\\ 
&  0.526868&  0.299899 & 0.227447 & 0.181156 & 0.148931 &   $E_{kin}$\\ \hline
\end{tabular}\label{tabla1four}
\end{table}

\begin{table}[H]
\centering
\caption{Stability dependence of scheme \eqref{scheme3eq1}-\eqref{scheme3eq2} on the
parameters $M$ and $\varepsilon$ for the annihilation phenomenon. 
In this case $\beta=-1, k=0.001$ and $h=0.0913931$.
$T_{\max}$ is the time where the kinetic energy reaches its maximum value.}
\begin{tabular}{r|lllll|l}\hline
\textbf{$\varepsilon \backslash M$} & 0 & 0.5  & 1 & 1.5 & 2 &\\\hline
  \multirow{ 3}{*}{0.1} & \ding{51} & \ding{51} & \ding{51} & 
  \ding{51} & \ding{51} & Stab.\\ 
     & 0.041     & 0.045  &   0.047 & 0.048 & 0.05 &$T_{\max}$\\ 
  & 0.51046 & 0.441406 & 0.406151 & 0.376596 & 0.350638 &$E_{kin}$\\ \hline
 \multirow{ 3}{*}{0.05} & \ding{51} & \ding{51} & \ding{51} & \ding{51} 
 & \ding{51} &  Stab.\\ 
 & 0.071 & 0.1 & 0.118 &  0.134 & 0.151 &$T_{\max}$\\ 
  & 0.682407 & 0.419277 & 0.332545  & 0.273872 & 0.231861 &$E_{kin}$\\ \hline
 \multirow{ 3}{*}{0.01}  &  \ding{55} & \ding{51} & \ding{51} & \ding{51} 
 & \ding{51} &  Stab.\\ 
 & $--$ & \textbf{No annihil.} & \textbf{No annihil.} & \textbf{No annihil.} 
 & \textbf{No annihil.} &$T_{\max}$\\ 
 & $--$ &  0.258405 & 0.169371 &  0.123507 & 0.096421 &$E_{kin}$\\ \hline
 \multirow{ 3}{*}{0.001} & \ding{55} & \ding{51} & \ding{51} & \ding{51} 
 & \ding{51} &  Stab.\\ 
 & $--$ &\textbf{No annihil.} & \textbf{No annihil.} 
 & \textbf{No annihil.} & \textbf{No annihil.} &$T_{\max}$\\ 
&  $--$ & 0.167737 & 0.0974185 & 0.06874 & 0.0524003 &$E_{kin}$\\ \hline
\end{tabular}\label{tabla2four}
\end{table}
\subsubsection{Study of $M$ vs. $\varepsilon$}

We are now interested in the dependence of the stability on the parameters $H_F$ and $\varepsilon$. The results are presented in Tables  \ref{tabla2} and \ref{tabla2four}. As before, the former is for the two-singularity case, while the latter is for  the four-singularity case. We fix $(\beta, k,h)=(-1,0.001,0.0913931)$ and move
\[
(\varepsilon,M)\in
\{0.1, 0.05, 0.01, 0.001\}\times \{0, 0.5, 1, 1.5, 2\}. 
\]
The results are similar to those presented in \cite[Table 3]{Cabrales2015} 
for the two-singularity case. In fact, for both cases of singularities, 
scheme \eqref{scheme3eq1}-\eqref{scheme3eq2} is unconditionally stable 
for $H_F$ such that $M \ge 0.5$ and conditionally stable for $H_F = 0$, 
where strong spurious oscillations appear  for $\varepsilon = 0.01$ and $0.001$.
 Moreover, for $\varepsilon= 0.1$ and $0.05$, the annihilation time
  becomes smaller and smaller as $H_F$ decreases to 0, and the maximum 
  of the kinetic energy decreases as $H_F$ becomes bigger and bigger, 
  but the qualitative behavior remains the same. This situation changes 
  drastically as $\varepsilon$ takes the values $0.01$ and $0.001$ where 
  there is no longer annihilation.  Figure \ref{EnCin2y4Sing} shows that 
  the kinetic energy decreases for two- and four-singularity cases, being 
  larger in the two-singularity case.  As was pointed in \cite{Cabrales2015}, 
  a possible explanation of this behavior might be that the velocity field 
  produced via the elastic tensor is not enough to move the singularity 
  points through the convective term in the director equation. In particular, 
  for $\varepsilon = 0.01$ and $0.001$, the kinetic energy  decays practically 
  to zero from the beginning. In light of the above, one might think that 
  if the kinetic energy associated to a velocity field was large enough 
  to move the singularities, then they would move each other.
 \begin{table}[H]
\centering
\caption{Stability dependence of scheme \eqref{scheme3eq1}-\eqref{scheme3eq2} on the
parameters $M$ and $\varepsilon$ for the annihilation phenomenon. 
In this case $\beta=-1, k=0.001$ and $h=0.0913931$.
$T_{\max}$ is the time where the kinetic energy reaches its maximum value.}
\begin{tabular}{r|lllll|l}\hline
$\varepsilon \backslash M$ & 0 & 0.5 &    1.0        &  1.5         & 2.0\\\hline
 \multirow{ 3}{*}{0.1}&  \ding{51}&  \ding{51}&  \ding{51}&  \ding{51}&  
 \ding{51}&Stab.\\ 
   & 0.168 & 0.188 & 0.2 & 0.211 & 0.222  &$T_{\max}$\\ 
   & 0.2392326 & 0.2011553 & 0.1828 & 0.1679  & 0.1551 &$E_{kin}$\\ \hline
 \multirow{ 3}{*}{0.05}&\ding{51}&  \ding{51}&  \ding{51}&  \ding{51}& 
  \ding{51}&Stab.\\ 
 & 0.242 & 0.369  & 0.445 &  0.516 & 0.585 &$T_{\max}$\\ 
 & 0.3335189 & 0.1909416 & 0.1490 & 0.1222 & 0.1032  &$E_{kin}$\\ \hline
 \multirow{ 3}{*}{0.01} &  \ding{55}&  \ding{51}  & \ding{51}&  
 \ding{51}&  \ding{51}& Stab.\\ 
 & $--$  & \textbf{No annihil.} & \textbf{No annihil.}  & \textbf{No annihil.} &  
 \textbf{No annihil.} &$T_{\max}$\\ 
 & $--$  & 0.007549 & 0.0032 & 0.0018 & 0.0014  &$E_{kin}$\\ \hline
 \multirow{ 3}{*}{0.001}&\ding{55}&  \ding{51}&  \ding{51}&  
 \ding{51}&  \ding{51}&Stab.\\ 
 & $--$ &  \textbf{No annihil.} &  \textbf{No annihil.} &    \textbf{No annihil.} & 
  \textbf{No annihil.} &$T_{\max}$\\ 
& $--$ & 0.105065  &   0.004  & 0.0022 &    0.0016  & $E_{kin}$\\ \hline
\end{tabular}\label{tabla2}
\end{table}

\begin{figure}[H]
\centering
\subfigure[]{
\includegraphics[scale=0.41]{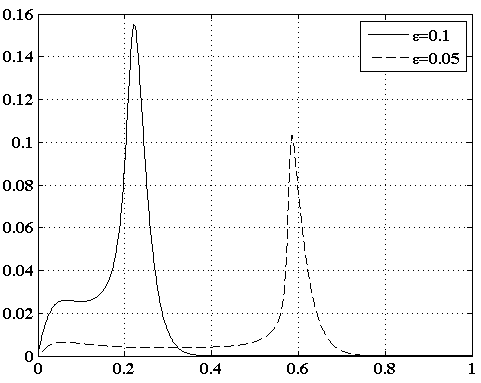}}
\hspace{0.5cm}
\subfigure[]{
\includegraphics[scale=0.41]{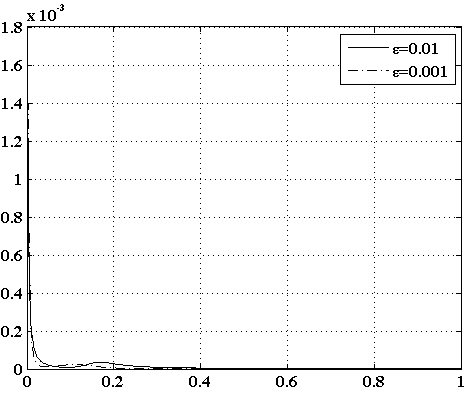}}\\
\subfigure[]{
\includegraphics[scale=0.41]{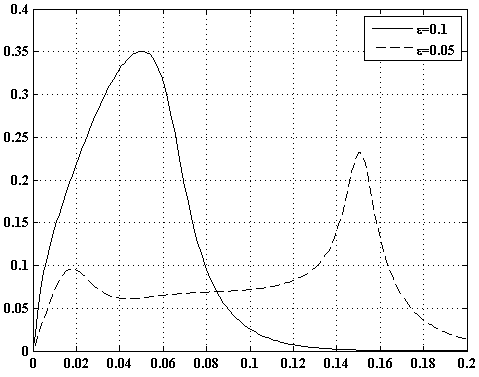}}\hspace{0.5cm}
\subfigure[]{
\includegraphics[scale=0.41]{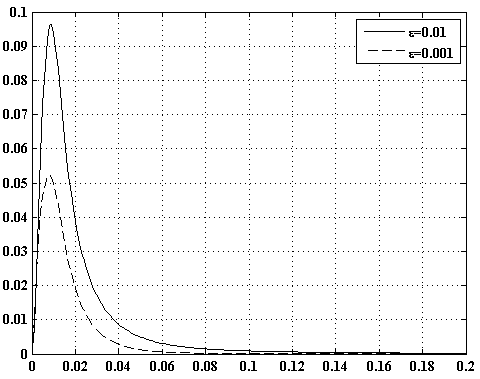}}
\caption{Evolution in time of the kinetic energy for 
$\varepsilon = 0.1, 0.05, 0.01, 0.001$. Top figures
are for the two singularities experiment, meanwhile,
bottom figures are for the four singularities experiment. In this
case, we consider $M=2.0$. }\label{EnCin2y4Sing}
\end{figure}

 \begin{figure}[H]
\centering
\subfigure[$\|\dd\|_\infty=1.11139$]{
\includegraphics[scale=0.33]{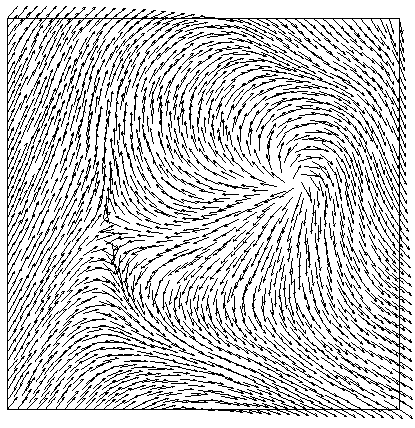}}
\subfigure[$\|\dd\|_\infty=1.01984$]{
\includegraphics[scale=0.33]{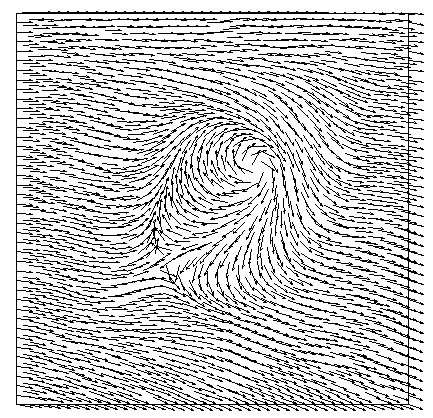}}
\subfigure[$\|\dd\|_\infty=1.06926$]{
\includegraphics[scale=0.33]{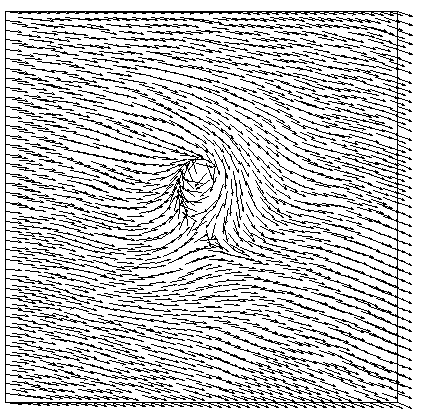}}
\subfigure[$\|\dd\|_\infty=0.992742$]{
\includegraphics[scale=0.33]{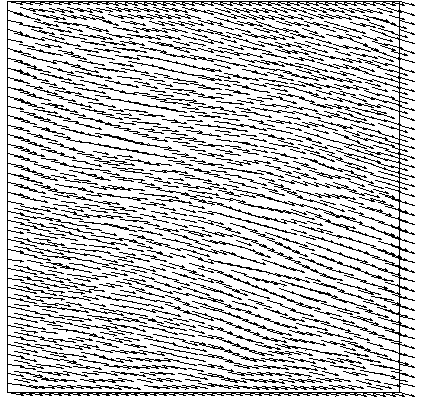}}
\subfigure[$\|\dd\|_\infty=1.04313$]{
\includegraphics[scale=0.33]{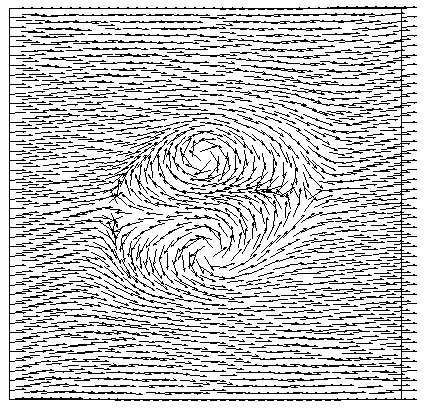}}
\subfigure[$\|\dd\|_\infty=1.06166$]{
\includegraphics[scale=0.33]{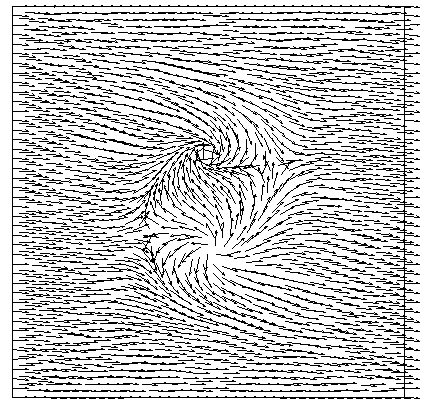}}
\subfigure[$\|\dd\|_\infty=1.03869$]{
\includegraphics[scale=0.33]{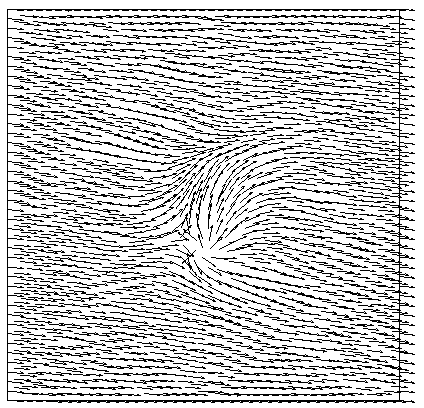}}
\subfigure[$\|\dd\|_\infty=1$]{
\includegraphics[scale=0.33]{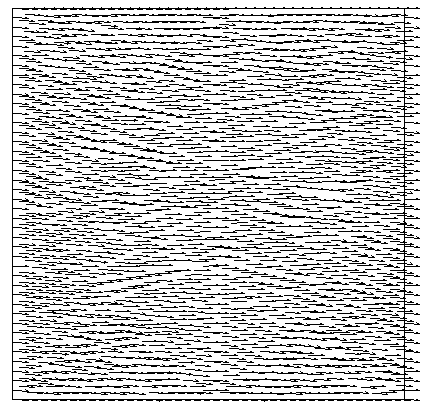}}
\caption{Evolution of the director field for a rotating flow. 
Top figures: two singularities experiment at times 
$t = 0.2, 0.5, 2.0, 3$. Bottom figures: four singularities
experiment at times  $t = 0.1, 0.6, 1.3, 3$.}\label{EnCin2y4Singvelnonula}
\end{figure}

\end{document}